\documentclass[11pt]{amsart}

\usepackage{amsfonts, amstext, amsmath, amsthm, amscd, amssymb}
\usepackage{epsfig, graphics, psfrag}
\usepackage{color}
\usepackage{url}
\usepackage[margin=1.2in]{geometry}
\usepackage{adjustbox}

\setlength{\marginparwidth}{0.8in}
\let\oldmarginpar\marginpar
\renewcommand\marginpar[1]{\oldmarginpar[\raggedleft\footnotesize #1]%
{\raggedright\footnotesize #1}}



\newcommand{\Z}{{\mathbb{Z}}}

\newcommand{\C}{{\mathbb{C}}}
\newcommand{\QQ}{{\mathbb{Q}}}

\theoremstyle{plain}
\newtheorem{theorem}{Theorem}[section]
\newtheorem{corollary}[theorem]{Corollary}
\newtheorem{lemma}[theorem]{Lemma}
\newtheorem{proposition}[theorem]{Proposition}

\newtheorem{conjecture}[theorem]{Conjecture}

\newtheorem*{namedtheorem}{\theoremname}
\newcommand{\theoremname}{testing}

\theoremstyle{definition}
\newtheorem{definition}[theorem]{Definition}
\newtheorem{remark}[theorem]{Remark}

\title[A quantum obstruction to cosmetic surgeries]{A quantum obstruction to purely cosmetic surgeries}

\author{Renaud Detcherry}
\address{Institut de Math\'ematiques de Bourgogne, UMR 5584 CNRS, Universit\'e Bourgogne Franche-Comt\'e, F-2100 Dijon, France \newline
         {\tt \url{http://detcherry.perso.math.cnrs.fr}}
         }
\email{renaud.detcherry@u-bourgogne.fr}

\begin{document}

\date{\today}

\begin{abstract}We present new obstructions for a knot $K$ in $S^3$ to admit purely cosmetic surgeries, which arise from the study of Witten--Reshetikhin--Turaev invariants at fixed level, and can be framed in terms of the colored Jones polynomials of $K.$
\\ In particular, we show that if $K$ has purely cosmetic surgeries then the slopes of the surgery are of the form $\pm \frac{1}{5k},$ except if $J_K(e^{\frac{2i\pi}{5}})=1,$ where $J_K$ is the Jones polynomial of $K.$ For any odd prime $r\geq 5,$ we also give an obstruction for $K$ to have a $\pm \frac{1}{k}$ surgery slope with $r\nmid k$ that involves the values of the first $\frac{r-3}{2}$ colored Jones polynomials of $K$ at an $r$-th root of unity. We verify the purely cosmetic surgery conjecture for all knots with at most $17$ crossings.
\end{abstract}


\maketitle

\footnotetext{This research has been funded by the project “ITIQ-3D” of the R\'egion Bourgogne Franche–Comt\'e and was carried out at the Institut de Math\'ematiques de Bourgogne. The IMB receives support from the EIPHI Graduate School (contract ANR-17-EURE-0002)}
\section{Introduction}
Given an oriented $3$-manifold $M$ with $\partial M=\mathbb{T}^2,$ a pair of slopes $s \neq s'$ on the boundary is said to form a \textit{cosmetic surgery pair} if the Dehn fillings $M(s)$ and $M(s')$ are homeomorphic, and a \textit{purely cosmetic surgery pair} if $M(s)$ and $M(s')$ are homeomorphic as oriented $3$-manifolds. We will write $M(s) \simeq M(s')$ when the two manifolds $M(s)$ and $M(s')$ are homeomorphic as oriented $3$-manifolds. 
\\
\\ In the case where $M$ is the complement $E_K$ of a non-trivial knot $K$ in $S^3,$ the cosmetic surgery conjecture, first formulated by Gordon in \cite{Gor91}, asserts that:
\begin{conjecture}(Cosmetic Surgery Conjecture)\label{conj:cosm_surg} If $K$ is a non-trivial knot in $S^3$ and $s \neq s'$ are two slopes, then $E_K(s) \not\simeq E_K(s').$
\end{conjecture}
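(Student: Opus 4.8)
My plan is to prove the conjecture by stacking obstructions until no non-trivial knot can satisfy all of them simultaneously, and to reduce the residual difficulty to a single arithmetic statement about values of colored Jones polynomials at roots of unity. The strategy has three stages: compress the list of admissible slope pairs, attach a quantum obstruction to each survivor, and then argue that for every non-trivial knot at least one obstruction is non-zero.

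First I would fix a purely cosmetic pair $E_K(s)\simeq E_K(s')$ and shrink the set of possible slopes. The work of Boyer--Lines and Ni--Wu forces $s'=-s$ and requires the second Conway coefficient to vanish, $a_2(K)=0$; this already settles every knot with $a_2(K)\neq 0$. The Heegaard Floer results of Ni--Wu then restrict the slopes to $\pm p/q$ with $q^2\equiv -1\pmod p$, and Hanselman's immersed-curve refinement both bounds the Seifert genus of $K$ and forces the slope to be $\pm 2$ or $\pm 1/q$. Thus the surviving candidates are knots with $a_2(K)=0$, bounded genus, and slopes of the very constrained shape $\pm 1/q$, together with $\pm 2$ as a single extra case.

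Next I would feed in the quantum obstruction recorded in the abstract. For each odd prime $r\geq 5$ it rules out a slope $\pm 1/k$ with $r\nmid k$ \emph{unless} the first $\tfrac{r-3}{2}$ colored Jones polynomials of $K$ degenerate at an $r$-th root of unity (for $r=5$ this degeneracy is exactly $J_K(e^{2i\pi/5})=1$). The leverage is that this must hold simultaneously for \emph{all} such primes: if the colored Jones data of $K$ are non-degenerate at even two coprime primes $r_1,r_2$, then $r_1r_2\mid k$, and as one ranges over more non-degenerate primes the forced divisor of $k$ grows without bound, which a single finite slope denominator cannot absorb. Combined with Hanselman's genus bound on $q$, this eliminates $K$. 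In other words, the whole conjecture reduces to showing that every non-trivial knot has non-degenerate colored Jones data at more than a genus-controlled number of primes $r$.

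The decisive and hardest step is exactly this non-degeneracy statement: to prove that for a non-trivial $K$ one cannot have $J_K(e^{2i\pi/r})=1$, together with its higher-colored analogues, for all but finitely many $r$. This is an arithmetic and transcendence question about the WRT invariants, close in spirit to the asymptotic expansion and volume-type conjectures, and it is precisely where the present methods stop: the techniques can verify the required non-degeneracy case by case, which is what yields the check up to $17$ crossings, but a uniform proof for all knots is not yet available. I expect the genuine obstacle to be either establishing this colored Jones non-vanishing unconditionally, or, failing that, classifying the finite family of knots on the surviving slopes for which every obstruction degenerates and excluding cosmetic surgeries on that family directly.
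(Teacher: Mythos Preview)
The statement is a \emph{conjecture}, and the paper does not prove it; it only supplies new obstructions and verifies the conjecture for knots with at most $17$ crossings. Your proposal is therefore not being compared against a proof in the paper---there is none---but must stand or fall as a proof on its own, and it does not.

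The genuine gap is the one you name yourself. Your argument reduces the conjecture to the claim that every non-trivial knot has ``non-degenerate'' colored Jones data at sufficiently many primes $r$, and you then write that ``a uniform proof for all knots is not yet available.'' That is exactly the situation: what you have written is a research program, not a proof. Two further points sharpen the difficulty. First, the non-degeneracy condition at level $r>5$ is not simply $J_K(e^{2i\pi/r})\neq 1$; it is the requirement that the vector $v_K$ of colored Jones values not be orthogonal to each of the $\tfrac{r+1}{2}$ vectors in $F_r$, one per residue class of $k$ (and one for the $\pm 2$ slope). So the ``arithmetic statement'' you are aiming at is a family of orthogonality conditions, not a single nonvanishing. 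Second, even the $r=5$ case can genuinely fail: any knot with trivial Jones polynomial has $J_K(e^{2i\pi/5})=1$, so your first obstruction contributes nothing there, and whether the higher colored Jones polynomials detect the unknot is itself an open problem. Finally, you leave the $\pm 2$ slope as ``a single extra case'' without indicating how the divisibility mechanism---which is the only leverage you describe---would apply to it; for $\pm 2$ there is no denominator to force divisibility on, so that case requires a separate argument you have not supplied.
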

In other words, no non-trivial knot in $S^3$ has a purely cosmetic surgery pair. In the more general setting of a $3$-manifold with torus boundary $M,$ the cosmetic surgery conjecture says that $M(s)\simeq M(s')$ if and only if there is a positive self-homeomorphism of $M$ which sends the slope $s$ to $s'.$
\\
\\ Cosmetic surgery pairs which are not purely cosmetic are called \textit{chirally cosmetic}. Chirally cosmetic pairs do exist: indeed, for any amphichiral knot, any pair of opposite slopes will form a chirally cosmetic pair. Moreover, some chirally cosmetic pairs on the right-hand trefoil knot, and more generally, $(2,n)$-torus knots, were found by Mathieu \cite{Mat92}.
\\
\\ The first result on Conjecture \ref{conj:cosm_surg} was given by Boyer and Lines \cite{BL90}. Using surgery formulas for the  Casson--Walker and Casson--Gordon invariants, they proved the conjecture for any knot $K$ such that $\Delta_K''(1)\neq 0,$ where $\Delta_K$ is the Alexander polynomial of $K.$ The next finite type invariant of knots also gives an obstruction: Ichihara and Wu showed \cite{IW16} that if $K$ admits purely cosmetic surgeries then $J_K'''(1)=0$ where $J_K$ is the Jones polynomial.
\\ Later on, much of the progress on Conjecture \ref{conj:cosm_surg} has been brought by studying Heegaard Floer homology of knots. First, it was proved than genus one knots do not have purely cosmetic surgeries \cite{Wan06}. Some conditions on the set of possible slopes in a purely cosmetic pair were established: it was first shown that the slopes in a purely cosmetic pair for a knot $K\subset S^3$ have opposite signs \cite{OS11} \cite{Wu11}, then Ni and Wu showed that the slopes must actually be opposite and furthermore that the Ozv\'ath-Szab\'o-Rasmussen $\tau$ invariant of $K$ must vanish \cite{NW15}. Also, the Heegaard Floer homology $\widehat{HFK}(K)$ must satisfy some other constraints \cite{NW15}\cite{Gai17}. 
\\ Then, the work of Futer, Purcell and Schleimer \cite{FPS19} showed for any hyperbolic $3$-manifold with torus boundary, it can be algorithmically checked whether it admits a purely cosmetic pair. Their result came out of a different direction, using hyperbolic geometry to bound the length of slopes in a purely cosmetic pair. With their method they proved the conjecture for knots with less than $15$ crossings.
\\ Using a new method to express the Heegaard Floer homology of surgeries, Hanselman put further restrictions on the surgery slopes \cite{Han19}. Similarly to Futer, Purcell and Schleimer's result, those conditions restrict the set of possible slopes to a finite set. Hanselman's bounds seem to be more powerful in practice, with the downside that so far they apply only to the case of knots in $S^3.$ 
\\ Indeed, Hanselman was able to use his results to show that no knot with prime summands with less than $16$ crossings has a purely cosmetic surgery pair. 
\\ Finally, let us mention that, analyzing JSJ decompositions, Tao has proved that composite knots \cite{Tao19} and cable knots \cite{Tao18} do not have purely cosmetic surgeries. It reduces the conjecture to the case of prime knots.
\\ We will now describe some of Hanselman's main theorem from \cite{Han19} in more detail below, as our results will build upon his work.
\\
\\ For $K$ a knot in $S^3,$ let $g(K)$ be its Seifert genus. Let $\widehat{HFK}(K)$ be its Heegaard Floer knot homology, which is bigraded with Alexander grading $A$ and Maslov grading $\mu.$ We define the $\delta$-grading by $\delta=A-\mu,$ and let the Heegaard Floer thickness of $K$ be 
$$th(K)=\mathrm{max}\lbrace\delta(x) \ | x\neq 0 \in \widehat{HFK}(K)\rbrace -\mathrm{min}\lbrace\delta(x) \ | x\neq 0 \in \widehat{HFK}(K)\rbrace$$
Said differently, $th(K)+1$ is the number of diagonals on which the Heegaard Floer homology of $K$ is supported.
\begin{theorem}\label{thm:hanselman}\cite{Han19} Let $K$ be a non-trivial knot in $S^3$ and $s\neq s'$ be slopes such that $E_K(s)\simeq E_K(s')$ then:
\begin{itemize}
\item[-]The pairs of slopes $\lbrace s,s' \rbrace$ is either $\lbrace \pm 2 \rbrace$ or of the form $\lbrace \pm \frac{1}{k} \rbrace$ for some non-negative integer $k.$
\item[-]If $\lbrace s,s' \rbrace=\lbrace \pm 2 \rbrace$ then $g(K)=2.$
\item[-]If $\lbrace s,s' \rbrace=\lbrace \pm \frac{1}{k} \rbrace$ then
$$k \leqslant \frac{th(K)+2g(K)}{2g(K)(g(K)-1)}$$
\end{itemize}
\end{theorem}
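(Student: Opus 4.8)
The plan is to compute the Heegaard Floer homology of the two fillings through Hanselman's immersed-curve reformulation of bordered invariants and then to extract the numerical bound from the geometry of the pairing. Recall that to a knot complement $E_K$ one associates a collection of immersed curves with local systems $\gamma(K)$ in the once-punctured torus $\partial E_K \setminus \{pt\}$; lifting to the cylinder and normalizing, there is a distinguished component whose vertical extent is governed by the Seifert genus (the Alexander gradings lie in $[-g(K), g(K)]$) and whose horizontal oscillation is governed by the $\delta$-thickness $th(K)$. Dehn filling along a slope $s$ corresponds to pairing $\gamma(K)$ with an immersed line $\ell_s$ of the matching slope, and $\widehat{HF}(E_K(s))$, together with its absolute gradings and correction terms, is read off from the minimal intersection points of $\gamma(K)$ with $\ell_s$.

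First I would invoke the reductions already available. By the work of Ni--Wu (and the earlier constraints of Ozsv\'ath--Szab\'o and Wu), in any purely cosmetic pair the two slopes are opposite, so it suffices to treat $\{s, s'\} = \{p/q, -p/q\}$ with $p, q > 0$; moreover $\tau(K) = 0$, which forces the distinguished component of $\gamma(K)$ to be vertically symmetric about height $0$ and pins down its behaviour near the puncture. The lines $\ell_{p/q}$ and $\ell_{-p/q}$ are related by the reflection $(\theta, z) \mapsto (\theta, -z)$, so the symmetry of $\gamma(K)$ already forces the equality of total ranks $\dim \widehat{HF}(E_K(p/q)) = \dim \widehat{HF}(E_K(-p/q))$; the genuine obstruction must therefore come from finer data.

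The heart of the argument is then a comparison of the correction terms (equivalently, of the Casson--Walker invariant) of the two fillings, computed from the heights at which $\ell_{\pm p/q}$ meets $\gamma(K)$. The key geometric input is that a line of slope $p/q$ must climb through the full vertical range $2g(K)$ of the distinguished component, and, because the horizontal period is normalized to the meridian, this climb produces on the order of $q\,g(K)$ forced intersections whose heights differ between slopes $p/q$ and $-p/q$. The only way these height contributions can be reconciled with an orientation-preserving homeomorphism is for them to be absorbed by the oscillation of the curve, whose amplitude is controlled by $th(K)$. Carrying out this bookkeeping turns the homeomorphism hypothesis into an inequality of the shape $2q\,g(K)\bigl(g(K)-1\bigr) \le th(K) + 2g(K)$, which is exactly the claimed bound with $q = k$; the exceptional pair $\{\pm 2\}$ arises as the degenerate case where the count collapses, and a direct genus computation then forces $g(K) = 2$.

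I expect the main obstacle to be precisely this last step: showing that the equality of total rank and of the correction term is genuinely obstructed, rather than coincidentally satisfied, once $q$ is large. This requires tracking the \emph{graded} intersection data of $\gamma(K)$ with $\ell_{\pm p/q}$ --- not merely the count of intersection points but their relative Maslov gradings and the contribution of any non-trivial local systems --- and verifying that the reflection relating the two lines genuinely alters this graded data as soon as the line is steep enough to wind past the turnaround region of the curve near height $\pm g(K)$. Handling the local systems carefully, and treating the small-genus and $\pm 2$ degenerate cases so that a single inequality captures all of them, is where the real work lies.
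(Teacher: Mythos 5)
You should first note that the paper does not prove this statement at all: it is imported verbatim from Hanselman \cite{Han19} and used as a black box, so the only meaningful comparison is with Hanselman's published argument. Your outline does adopt the same framework as that argument (the immersed-curve reformulation of bordered Floer homology, the pairing of $\gamma(K)$ with a line of slope $s$, and the Ni--Wu reduction \cite{NW15} to opposite slopes with $\tau(K)=0$), but as a proof it stops short precisely where the content of the theorem lies, and the quantities you propose to drive the key step are demonstrably too weak.

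Concretely, there are three gaps. First, the invariants you place at the heart of the argument --- the correction terms, ``equivalently the Casson--Walker invariant'' --- cannot produce the stated inequality. For slopes $\pm\frac{1}{k}$ the fillings are integer homology spheres, and their $d$-invariants do not depend on $k$: one has $d(E_K(\tfrac{1}{k}))=-2V_0(K)$ for all $k>0$ and $d(E_K(-\tfrac{1}{k}))=2V_0(\overline{K})$ (where $\overline{K}$ is the mirror), so equality of correction terms is a $k$-independent condition; likewise the Casson--Walker surgery formula gives $\lambda(E_K(\pm\tfrac{1}{k}))=\pm\tfrac{k}{2}\Delta_K''(1)$, whose comparison yields only $\Delta_K''(1)=0$ (already Boyer--Lines \cite{BL90}), again with no constraint on $k$. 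No bookkeeping with these quantities can therefore output $k \leqslant (th(K)+2g(K))/(2g(K)(g(K)-1))$, nor the integer $n_s$ of the paper's remark; what Hanselman actually compares is the full graded vector space $\widehat{HF}$ of the two fillings, whose grading spread grows linearly in $k$ and quadratically in the Alexander parameter, and the inequality records when the thickness can absorb the mismatch. You gesture at ``graded intersection data'' only in your final paragraph, and you do not carry out any of that computation --- but that computation \emph{is} the theorem. Second, your claim that the symmetry of $\gamma(K)$ forces equality of total ranks of the two fillings is unjustified: $\tau(K)=0$ gives the distinguished component zero net height change, not invariance under the reflection $(\theta,z)\mapsto(\theta,-z)$; the multicurve is symmetric under the elliptic involution, which is a point reflection and preserves slopes of lines, so it does not relate $\ell_{p/q}$ to $\ell_{-p/q}$. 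Third, the first two bullets of the statement (exclusion of all pairs other than $\{\pm 2\}$ and $\{\pm\frac{1}{k}\}$, and $g(K)=2$ in the $\{\pm 2\}$ case) are dismissed as a ``degenerate case'' and a ``direct genus computation'' without argument, whereas they require their own analysis of the spin$^c$-refined data. In short: right framework, but the decisive quantitative steps are missing, and the route you propose for them provably cannot succeed.
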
  
\begin{remark}Actually, Hanselman's proof also show that there is an integer $n_s$ which can computed from  $\hat{HFK}(K)$ such that if $E_K(\tfrac{1}{k})\simeq E_K(-\tfrac{1}{k})$ then $k=n_s.$
\end{remark}
Despite the success of Heegaard Floer homology, it is generally believed that other methods will be needed to study the cosmetic surgery conjecture. Indeed, Hanselman found $337$ knots $K \subset S^3$ for which Heegaard Floer homology does not distinguish between $E_K(s)$ and $E_K(-s)$ for $s=1$ or $2.$
\\
\\ Invariants that have nice surgery expressions are natural candidates for applications to the cosmetic surgery conjecture. Therefore, the author tried to investigate what information the Witten--Reshetikhin--Turaev invariants have to offer about cosmetic surgeries. As WRT invariants are part of TQFTs, they admit some natural surgery expressions. The case of the so-called $\mathrm{SO}_3$ WRT invariants at level $5$ seems to give the most straightforward condition. We find:
\begin{theorem}\label{thm:main_thm} Let $K\subset S^3$ be a knot, let $J_K$ be its Jones polynomial, and assume that $K$ has a purely cosmetic surgery pair $\lbrace \pm s \rbrace.$
\\ Then $s$ is of the form $\pm \frac{1}{5k}$ for some $k \in \Z,$ unless $J_K(e^{\frac{2i\pi}{5}})=1.$
\end{theorem}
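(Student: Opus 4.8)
The plan is to obstruct the oriented homeomorphism $E_K(s)\simeq E_K(-s)$ by comparing the $\mathrm{SO}_3$ Witten--Reshetikhin--Turaev invariant $\tau_5$ of the two fillings: orientation-preservingly homeomorphic manifolds have equal $\tau_5$, so by Theorem \ref{thm:hanselman}, which restricts the pair to $\{\pm 2\}$ or $\{\pm\tfrac1m\}$, any nonzero value of $\tau_5(E_K(s))-\tau_5(E_K(-s))$ is an obstruction. At $r=5$ the relevant modular category is the Fibonacci category, with two simple objects $\mathbf 1$ and $X$ of quantum dimensions $1$ and $\phi=\tfrac{1+\sqrt5}2$, modular $S$-matrix $\tfrac1D\left(\begin{smallmatrix}1&\phi\\ \phi&-1\end{smallmatrix}\right)$ (where $D^2=1+\phi^2$), and twist $\theta_X=e^{4i\pi/5}=\zeta^2$ on $X$, writing $\zeta=e^{2i\pi/5}$. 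I write $J'_{K,n}$ for the colored Jones polynomial normalized so that $J'_{U,n}=1$, so that $J'_{K,2}(\zeta)=J_K(\zeta)$ while $J'_{K,3}(\zeta)$ is the $X$-colored invariant.

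First I would realize $\pm\tfrac1m$ surgery on $K$ as integral surgery on the two-component link $K\cup\mu$, with $\mu$ a meridian of $K$ framed by $\mp m$ and $K$ framed by $0$. The linking matrix $\left(\begin{smallmatrix}0&1\\ 1&\mp m\end{smallmatrix}\right)$ has determinant $-1$ and signature $0$ for both signs, so the framing anomalies $\kappa^{-\sigma}$ agree and cancel in the comparison. Summing both colors over $K$ and over $\mu$, and using that a meridian colored $b$ encircling a strand colored $a$ contributes the eigenvalue $S_{ab}/S_{0a}$, the state sum collapses to a single sum over the color $a$ of $K$ in which all the $m$-dependence is carried by the twist $\theta_X^{\mp m}$ on the color summed along $\mu$. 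A direct evaluation in the Fibonacci data then gives
\[
\tau_5\!\big(E_K(\tfrac1m)\big)-\tau_5\!\big(E_K(-\tfrac1m)\big)=c\,\big(\theta_X^{-m}-\theta_X^{\,m}\big)\,\big(1-J'_{K,3}(\zeta)\big)
\]
for a nonzero constant $c$ (explicitly $c=\phi^2/(ND)$ with $N$ the normalization of $\tau_5$). Since $\theta_X^{-m}-\theta_X^{\,m}=0$ exactly when $\zeta^{4m}=1$, i.e. when $5\mid m$, the cosmetic hypothesis forces either $5\mid m$, so that $s=\tfrac1m=\tfrac1{5k}$, or else $J'_{K,3}(\zeta)=1$.

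It then remains to translate the condition $J'_{K,3}(\zeta)=1$ into the stated condition $J_K(\zeta)=1$ on the ordinary Jones polynomial. For this I would use the fusion rule $X\otimes X=\mathbf 1\oplus X$ together with the classical $V_2\otimes V_2=V_1\oplus V_3$ to express $J'_{K,3}(\zeta)$ through the values of $J_K$ at the primitive fifth roots of unity; because $J_K$ has integer coefficients, its values at $\zeta$ and $\zeta^2$ are Galois-conjugate, and the degeneracy $[2]=[3]=\phi$ special to $r=5$ should make the resulting expression collapse, so that $J'_{K,3}(\zeta)=1$ if and only if $J_K(\zeta)=1$. Finally the pair $\{\pm2\}$ is handled by the analogous comparison for integral surgeries, where now the twist $\theta^{\pm2}$ is carried by the color of $K$ itself and the framing anomalies are $\kappa^{\pm1}$; this short computation again returns $J_K(\zeta)=1$, so the exceptional slope $s=2$ is covered by the same clause.

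I expect the main obstacle to be twofold. The delicate point in the surgery computation is to fix normalizations so that the clean factorization above holds on the nose, in particular the encircling eigenvalue $S_{ab}/S_{0a}$ and the exact cancellation of the framing anomalies coming from $\sigma=0$. The genuinely hard step, however, is the last one: establishing the precise identity relating the $X$-colored invariant $J'_{K,3}(\zeta)$ to $J_K(\zeta)$ and verifying that it forces $J_K(\zeta)=1$. This is where the arithmetic of $\QQ(\zeta)$ and the coincidence $[2]=[3]$ at the fifth root of unity must be used in an essential way.
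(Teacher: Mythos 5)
Your surgery computation is correct, and it is in essence the paper's own argument rewritten as a state sum. I checked the key points: with the presentation $K_0\cup\mu_{\mp m}$ both linking matrices $\left(\begin{smallmatrix}0&1\\ 1&\mp m\end{smallmatrix}\right)$ have determinant $-1$, hence signature $0$, so the anomalies do cancel; since $\theta_{\mathbf 1}=1$, only the color $b=X$ on $\mu$ survives in the difference, and the remaining sum over the color of $K$ collapses to $\phi\left(1-J'_{K,3}\right)$ using $\tilde{S}_{\mathbf 1 X}=\phi$, $\tilde{S}_{XX}=-1$; so your factorization holds with a nonzero constant. The paper reaches the same dichotomy by a different bookkeeping: Propositions \ref{prop:surg_formula1} and \ref{prop:surgery_formula2} express $Z_5(E_K(\pm\tfrac1k))$ as TQFT pairings of $Z_5(E_K)$ with $\rho_5(ST^{\mp k}S)f_1$ (with Maslov-index anomaly corrections), and the proofs of Theorems \ref{thm:main_thm2} and \ref{thm:main_thm} then use that $\dim Z_5(\mathbb{T}^2)=2$ and that the unknot realizes all these cosmetic pairs (Remark \ref{rk:unknot}), so orthogonality to a nonzero vector of $F_5$ forces $Z_5(E_K)$ to be collinear with $Z_5(E_U)$. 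Your complete factorization is exactly what that abstract argument amounts to when there is a single nontrivial color. Note also that your $\{\pm 2\}$ sketch only closes after checking that the coefficient of $J'_{K,3}$, namely $\kappa_5^{-1}\theta_X^{2}-\kappa_5\theta_X^{-2}$, is nonzero (this is the second half of Lemma \ref{lemma:finite_set}) and after using amphichirality of $L(2,1)$, i.e. the unknot case, to identify the constant term.

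The genuine gap is the step you yourself flag at the end: deducing $J_K(e^{\frac{2i\pi}{5}})=1$ from $J'_{K,3}(\zeta)=1$. The route you sketch cannot work as stated. The fusion rule $V_2\otimes V_2=V_1\oplus V_3$ expresses $[1]+[3]J'_{K,3}$ as the invariant of the $2$-cable of $K$ with each strand colored by $V_2$; cabling is not multiplicative on colored invariants, so this is the Jones polynomial of a $2$-component cable link, not any function of the values of $J_K$, and neither Galois conjugation nor the coincidence $[2]=[3]$ will convert it into one. What you actually need is the Kirby--Melvin symmetry principle \cite{KM91}: at $r=5$ the spin-$\tfrac32$ object is invertible, $V_{3/2}\otimes V_{1/2}\cong V_1$ at level $3$, and for a $0$-framed knot this identifies the $3$-colored and $2$-colored invariants at the corresponding roots of unity. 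Alternatively—and this is how the paper avoids the issue entirely—set up the computation in the BHMV $\mathrm{SO}_3$ skein framework from the start: there the two colors are $e_1,e_2$, the nontrivial one being the $2$-dimensional representation, with $\langle K,e_2\rangle(A_5)=-[2]J_K(A_5^4)=-[2]J_K(\zeta_5^2)$, so the Jones polynomial enters the surgery formula directly and the only arithmetic left is the equivalence $J_K(\zeta_5^2)=1\Leftrightarrow J_K(\zeta_5)=1$ by Galois action, which is also the paper's last line.
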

At this point, the role of the $5$-th root of unity, and multiples of $5$ in the denominator of cosmetic surgery slopes  may seem mysterious. This is mostly a matter of exposition: we will also get similar conditions out of $\mathrm{SO}_3$ WRT TQFTs at different levels than $5$; with the drawback that they involve several colored Jones polynomials of $K.$ Here, let us write $J_{K,n}$ for the $n$-th normalized colored Jones polynomial of $K,$ with the convention that $J_{K,1}=1$ and $J_{K,2}$ is the Jones polynomial.
\begin{theorem}\label{thm:main_thm2}
Let $r \geqslant 5$ be an odd prime, $\zeta_r$ be a primitive $r$-th root of unity and $K$ be a non-trivial knot. Let also $[n]=\frac{\zeta_r^n-\zeta_r^{-n}}{\zeta_r-\zeta_r^{-1}}.$ There exists a finite set $F_r$ of nonzero vectors in $\C^{\frac{r-1}{2}},$ with cardinality $|F_r|\leqslant \frac{r+1}{2}$ such that if $K$ has a purely cosmetic surgery pair, then either the slopes are of the form $\lbrace \pm \frac{1}{rk}\rbrace$ with $k\in \Z,$ or the vector 
$$v_K=\begin{pmatrix}
1 \\ -[2]J_{K,1}(\zeta_r^2) \\ [3]J_{K,2}(\zeta_r^2) \\ \vdots \\ (-1)^{\frac{r-3}{2}}[\frac{r-1}{2}]J_{K,\frac{r-3}{2}}(\zeta_r^2)
\end{pmatrix}$$
is orthogonal to an element of $F_r.$
\end{theorem}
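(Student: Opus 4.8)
The plan is to read off the constraint from the $\mathrm{SO}_3$ Witten--Reshetikhin--Turaev invariants $\tau_r$ at level $r$ via their behaviour under Dehn filling. For an odd prime $r$ the associated TQFT assigns to the boundary torus a state space of dimension $\frac{r-1}{2}$, with a distinguished basis $(e_n)_{1\leqslant n\leqslant \frac{r-1}{2}}$ on which the mapping class group $\mathrm{SL}_2(\Z)$ of the torus acts through the modular matrices $S$ and $T$, the latter diagonal with entries the twist coefficients $\theta_n$. The complement $E_K$ defines a vector $Z(E_K)$ in this space whose coordinates are, up to an overall normalization and the signs, the colored Jones values $[n]\,J_{K,n}(\zeta_r^2)$ assembled in $v_K$; so $Z(E_K)$ is proportional to $v_K$. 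If $\phi_{p/q}\in\mathrm{SL}_2(\Z)$ denotes a mapping class sending the meridian $(1,0)$ to the slope $(p,q)$, then filling gives a surgery formula
$$\tau_r\bigl(E_K(p/q)\bigr)=\kappa_{p/q}\,\bigl\langle Z(E_K),\,\rho(\phi_{p/q})\,e_1\bigr\rangle,$$
where $e_1$ is the vector of the empty solid torus, $\rho$ is the projective $(S,T)$-action, and $\kappa_{p/q}$ is the framing (anomaly) correction.

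By \cite{NW15} the slopes of a purely cosmetic pair are opposite, so I may restrict to a pair $\{p/q,-p/q\}$, and invoking Theorem \ref{thm:hanselman} I may further assume $p=1$. Since $E_K(p/q)\simeq E_K(-p/q)$ as oriented manifolds, their invariants agree,
$$\tau_r\bigl(E_K(p/q)\bigr)=\tau_r\bigl(E_K(-p/q)\bigr).$$
Feeding the surgery formula into both sides yields
$$\bigl\langle Z(E_K),\,\kappa_{p/q}\,\rho(\phi_{p/q})e_1-\kappa_{-p/q}\,\rho(\phi_{-p/q})e_1\bigr\rangle=0.$$
Setting $w_{p/q}=\kappa_{p/q}\,\rho(\phi_{p/q})e_1-\kappa_{-p/q}\,\rho(\phi_{-p/q})e_1$ and recalling that $Z(E_K)$ is proportional to $v_K$, this is exactly the statement that $v_K$ is orthogonal to $w_{p/q}$ (orthogonality being insensitive to the scalar relating $Z(E_K)$ and $v_K$, and to the overall phase of $w_{p/q}$).

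It then remains to package the vectors $w_{p/q}$ into the finite set $F_r$ and bound its size. The key point is that the entries of $S$ and $T$ are $r$-th roots of unity, so $\rho$ descends to a projective representation of $\mathrm{SL}_2(\Z/r)$; hence $\rho(\phi_{p/q})e_1$, and therefore the direction of $w_{p/q}$, depends only on the reduction of the slope in $\mathbb{P}^1(\Z/r)$. Defining $F_r$ to be the set of nonzero vectors $w_{p/q}$ arising this way gives a finite set, and the bound $|F_r|\leqslant\frac{r+1}{2}$ is a counting statement: $\mathbb{P}^1(\Z/r)$ has $r+1$ points, the involution $s\mapsto-s$ coming from the opposite-slope pairing leaves the classes $\infty$ and $0$ fixed and matches the remaining $r-1$ points in $\frac{r-1}{2}$ pairs, and deleting the class $\infty$ (where $w_{p/q}=0$) leaves $\frac{r+1}{2}$ classes. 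Finally I would verify that $w_{p/q}=0$ precisely when $r\mid q$: for $r\mid q$ both $\phi_{\pm p/q}$ reduce modulo $r$ to upper-triangular elements, so $\rho(\phi_{\pm p/q})e_1$ is a scalar multiple of $e_1$ and the two terms cancel, whereas for $r\nmid q$ the twist factors $\theta_n^{\mp}$ genuinely separate the two vectors. With $p=1$ from Theorem \ref{thm:hanselman}, this vanishing locus is exactly the family of slopes $\pm\frac{1}{rk}$, which are the stated exceptions.

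The hard part will be the anomaly bookkeeping together with the vanishing criterion for $w_{p/q}$. One must control the framing corrections $\kappa_{\pm p/q}$ for rational surgery (which involve the signature of the associated linking matrix, hence a central-charge root of unity) and check that the resulting $w_{p/q}$ still depends only on the slope modulo $r$ up to scalar, so that the orthogonality condition and the count on $\mathbb{P}^1(\Z/r)$ are legitimate. Equally delicate is pinning down exactly when $w_{p/q}$ vanishes, which requires understanding the action of the diagonal torus of $\mathrm{SL}_2(\Z/r)$ on $e_1$ — a Galois/colour-relabelling action — and it is precisely this analysis that forces the divisibility $r\mid q$, and hence the denominators $rk$, to appear. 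Once these two points are settled, the cardinality bound and the identification of $F_r$ are elementary.
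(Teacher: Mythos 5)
Your overall strategy is the same as the paper's (the TQFT surgery formula, equality of the two filled invariants giving orthogonality of $Z_r(E_K)\propto v_K$ against a difference vector, and level-$r$ periodicity of $\rho_r(T)$ to make the set of difference vectors finite), but there are two genuine gaps. The first is your reduction to $p=1$: Theorem \ref{thm:hanselman} leaves \emph{two} alternatives, $\lbrace \pm \tfrac{1}{k}\rbrace$ \emph{or} $\lbrace \pm 2\rbrace$, and the second cannot be discarded. Your attempted remedy --- that the direction of $\rho(\phi_{p/q})e_1$, hence of $w_{p/q}$, depends only on the image of the slope in $\mathbb{P}^1(\Z/r\Z)$ --- is false. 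What the descent to $\mathrm{SL}_2(\Z/r\Z)$ actually gives is dependence on the vector $(p,q)$ mod $r$ up to sign: two lifts of the same projective point differ by a Borel element, and while its unipotent part $T^j$ fixes $e_1$, its diagonal part acts by exactly the colour-relabelling you yourself invoke, which does \emph{not} preserve the line $\C e_1$. Concretely, $T^2S$ and $ST^{-k}S$ with $k\equiv 2^{-1} \bmod r$ send the meridian into the same projective class, yet $\rho_r(T^2S)f_1$ is proportional to $\rho_r(ST^{-k}S)f_2$, not to $\rho_r(ST^{-k}S)f_1$. This is precisely why the paper's $F_r$ (Definition \ref{def:finite_set}) contains, besides the $\tfrac{r-1}{2}$ vectors $\rho_r(ST^{-k}S)f_1-\rho_r(ST^kS)f_1$, a \emph{separate} vector $\rho_r(T^2S)f_1-\kappa_r^{-2}\rho_r(T^{-2}S)f_1$ for the $\lbrace\pm2\rbrace$ case; that extra vector, not your irrelevant slope-$0$ class, is what makes the bound $\tfrac{r+1}{2}$ rather than $\tfrac{r-1}{2}$. (A smaller but real point: since $\langle\langle\cdot,\cdot\rangle\rangle$ is antilinear in its right slot, the anomaly factors must enter the difference vector conjugated --- whence the $\kappa_r^{-2}$ above --- so your formula placing $\kappa_{\pm p/q}$ linearly inside the pairing is incorrect as written.)

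The second gap is that the nonvanishing of the vectors in $F_r$ --- without which ``orthogonal to an element of $F_r$'' can be vacuous, and which is part of the statement --- is asserted rather than proved: ``the twist factors genuinely separate the two vectors'' is not an argument, and you explicitly defer the analysis. This is the paper's key Lemma \ref{lemma:finite_set}, and it requires a real proof, which goes as follows. If $\rho_r(ST^{-k}S)f_1=\rho_r(ST^kS)f_1$ with $r\nmid k$, apply $\rho_r(T^kS)$ and use $\rho_r(S^2)=Id$ to get $\rho_r(S)f_1=\rho_r(T^{2k}S)f_1$; thus $\rho_r(S)f_1$ is fixed by $\rho_r(T^{2k})$, hence is an eigenvector of $\rho_r(T)$ because $\gcd(2k,r)=1$; but $\rho_r(T)$ has pairwise distinct eigenvalues $(-A_r)^{i^2-1}$ while $\rho_r(S)f_1=\eta_r\sum_j(-1)^{1+j}[j]f_j$ has every coordinate nonzero, a contradiction once $\dim Z_r(\mathbb{T}^2)=\tfrac{r-1}{2}\geq 2$. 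The same scheme, with $4$ coprime to $r$, handles the $\lbrace\pm2\rbrace$ vector; note the argument visibly fails at $r=3$ (cf.\ Remark \ref{rk:order3root}), so any correct proof must use $r\geq 5$ at this point. Until the $\lbrace\pm2\rbrace$ case is incorporated and this nonvanishing lemma is actually established, the proposal does not prove the theorem.
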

\begin{remark}The vectors in $F_r$ can actually be naturally be associated with pairs $\lbrace \pm 2 \rbrace$ or $\lbrace \pm \frac{1}{\underline{k}}\rbrace$ where $\underline{k}$ is a non-zero element of $\Z/r\Z.$ As the proof of Theorem \ref{thm:main_thm} will show, if $E_K(2)\simeq E_K(-2)$ then $v_K$ is orthogonal to $f_{\lbrace \pm 2 \rbrace}$ and if $E_K(\tfrac{1}{k})\simeq E_K(-\tfrac{1}{k})$ then $v_K$ is orthogonal to $f_{\lbrace \frac{1}{\underline{k}}\rbrace}$ where $\underline{k}$ is the class of $k$ mod $r.$

In particular, if one were able to compute the invariant $n_s$ in Hanselman's theorem for a knot $K,$ then it would suffice to test orthogonality of $v_K$ with $f_{\lbrace \pm 2 \rbrace}$ and with $f_{\lbrace \pm \frac{1}{\underline{n_s}}\rbrace}$ to rule out a purely cosmetic surgery pair for $K.$
\end{remark}
\begin{remark}\label{rk:level3} While the $\mathrm{SO}_3$ WRT invariant at level $r=3$ exists, it is entirely determined by homological data. It turns out that the level $r=5$ is the first one where the $\mathrm{SO}_3$ WRT invariants give non-trivial information about cosmetic surgeries.
\end{remark} 
Let us now restrict to the simpler condition we get from the case $r=5$ and focus on the corollaries of Theorem \ref{thm:main_thm}.

 Theorem \ref{thm:main_thm} combined with Hanselman's results implies that if a knot $K$ has a purely cosmetic surgery pair and $J_K(e^{\frac{2i\pi}{r}})\neq 1,$ then $K$ must have a rather large crossing number:
\begin{corollary} If $K$ is a non-trivial knot with at most $31$ crossings and such that $J_K(e^{\frac{2i\pi}{5}})\neq 1,$ then $K$ has no purely cosmetic surgery pair.
\end{corollary}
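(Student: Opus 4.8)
The plan is to argue by contradiction, feeding Theorem~\ref{thm:main_thm} into Hanselman's Theorem~\ref{thm:hanselman} and then closing the argument with a bound on the Heegaard Floer thickness in terms of the crossing number. So I would assume that $K$ is non-trivial, has crossing number $c(K)\leqslant 31$, satisfies $J_K(e^{2i\pi/5})\neq 1$, and nonetheless admits a purely cosmetic surgery pair. By Theorem~\ref{thm:hanselman} this pair is either $\lbrace \pm 2\rbrace$ or of the form $\lbrace \pm\tfrac1k\rbrace$ with $k$ a positive integer. But Theorem~\ref{thm:main_thm}, combined with $J_K(e^{2i\pi/5})\neq 1$, forces the common slope to be $\pm\tfrac1{5k'}$ for some $k'\in\Z$. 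Since $2$ is not of this form the pair $\lbrace\pm 2\rbrace$ is impossible, so we are in the case $\lbrace\pm\tfrac1k\rbrace$ with $k=5k'$; thus $k$ is a positive multiple of $5$, and in particular $k\geqslant 5$.

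The next step is to produce the two inputs needed to contradict Hanselman's inequality $k\leqslant \frac{th(K)+2g(K)}{2g(K)(g(K)-1)}$. First, since genus one knots have no purely cosmetic pair (Wang) and $K$ is non-trivial, we have $g(K)\geqslant 2$, so that the right-hand side is defined and positive. Second --- and this is the decisive quantitative point --- I would bound the thickness through the Turaev genus $g_T(K)$: Lowrance's inequality gives $th(K)\leqslant g_T(K)$, while any diagram with $c$ crossings has $g_T\leqslant \lfloor c/2\rfloor$, so $g_T(K)\leqslant\lfloor c(K)/2\rfloor$. The hypothesis $c(K)\leqslant 31$ therefore yields $th(K)\leqslant 15$.

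Finally I would check that $th(K)\leqslant 15$ and $g(K)\geqslant 2$ are incompatible with $k\geqslant 5$. The quantity $\frac{th(K)+2g}{2g(g-1)}$ is decreasing in $g$ for $g\geqslant 2$ (differentiate), so over the allowed range it is maximised at $g=2$, where it equals $\frac{th(K)+4}{4}\leqslant\frac{19}{4}<5$. Hanselman's inequality then forces $k<5$, the desired contradiction, and the corollary follows. I expect the only real difficulty to be the sharpness of the thickness estimate: the argument lives exactly at the threshold $c=31$, since $th=16$ already makes the bound at $g=2$ equal to $5$ and would admit $k=5$. One therefore cannot afford any slack, and must invoke the optimal bound $th(K)\leqslant g_T(K)\leqslant\lfloor c(K)/2\rfloor$ rather than any cruder estimate on the support of $\widehat{HFK}(K)$.
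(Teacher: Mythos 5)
Your argument is correct and follows essentially the same route as the paper: Theorem \ref{thm:main_thm} forces the surgery denominator to be a multiple of $5$, Wang's result gives $g(K)\geqslant 2$, and Lowrance's bound $th(K)\leqslant g_T(K)\leqslant c(K)/2\leqslant 15$ then contradicts Hanselman's inequality. The only cosmetic difference is that you phrase it as a contradiction with the explicit monotonicity check in $g$, while the paper phrases it as $th(K)\geqslant 16$ versus $th(K)\leqslant 15$; the content is identical.
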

\begin{proof}
By Theorem \ref{thm:main_thm}, if $K$ has a purely cosmetic surgery pair then the slopes must be of the form $\pm \frac{1}{5k},$ so the denominator is at least $5.$
\\ By Hanselman's inequality and the fact that $g(K)\geqslant 2$ by \cite{Wan06}, this implies that $th(K)\geqslant 16.$
\\ However, Lowrance proved that $th(K)\leqslant g_T(K),$ where $g_T(K)$ is the Turaev genus of $K$ \cite{Low08}.
\\ But for any knot $K,$ the Turaev genus $g_T(K)$ is bounded above by $c(K)/2$ where $c(K)$ is the crossing number of $K.$ (see for example \cite[Proposition 2.4]{Abe09}).
\\ Thus a knot with $c(K)\leqslant 31$ crossings has thickness at most $15,$ and can not have a purely cosmetic surgery pair if $J_K(e^{\frac{2i\pi}{5}})\neq 1.$  
\end{proof}
Let us note that according to Hanselman's computations \cite{Han19} a prime knot with at most $16$ crossings has thickness at most $2,$ so the inequality $th(K) \geqslant \frac{c(K)}{2}$ we used seems to be far from optimal.

 The literature has been particularly interested in the special case of alternating knots. In that case, hypothetical alternating counterexamples of the conjecture have been shown to have a very special form. Indeed, Hanselman showed in \cite[Theorem 3]{Han19} that an alternating knot $K$ (or, more generally, a \textit{thin knot}, which has $th(K)=0$) with a purely cosmetic surgery must have signature $0$ and Alexander polynomial $\Delta_K(t)=nt^2-4nt+(6n+1)-4nt^{-1}+nt^{-2}$ for some $n\in \Z.$ As a corollary of our main result, we can put an extra condition on the Jones polynomial of $K$:
\begin{corollary}\label{cor:alternating} If $K$ is an alternating (or thin) knot with a purely cosmetic surgery, then $J_K(e^{\frac{2i\pi}{5}})=1.$
\end{corollary}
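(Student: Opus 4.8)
The plan is to argue by contradiction, feeding the conclusion of Theorem \ref{thm:main_thm} into Hanselman's slope inequality. Since every alternating knot is thin, I may assume $K$ is thin, i.e. $th(K)=0$. Suppose $K$ admits a purely cosmetic surgery pair but $J_K(e^{\frac{2i\pi}{5}})\neq 1$. Any purely cosmetic pair has the form $\{\pm s\}$, so Theorem \ref{thm:main_thm} applies, and because $J_K(e^{\frac{2i\pi}{5}})\neq 1$ it forces $s=\pm\frac{1}{5k}$ for some integer $k$. For the two slopes $\pm\frac{1}{5k}$ to be distinct (and for $\frac{1}{5k}$ to be a genuine finite slope) we need $k\neq 0$, so in the parametrization $\{\pm\frac{1}{m}\}$ of Theorem \ref{thm:hanselman} we have $m=5|k|\geq 5$. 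Note this simultaneously excludes the exceptional pair $\{\pm 2\}$ allowed by Theorem \ref{thm:hanselman}, since $2$ is not of the form $\frac{1}{5k}$.

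Next I would apply Hanselman's bound. By Theorem \ref{thm:hanselman}, $m\leq \frac{th(K)+2g(K)}{2g(K)(g(K)-1)}$, and setting $th(K)=0$ collapses the right-hand side to $\frac{1}{g(K)-1}$. Because $K$ has a purely cosmetic pair, Wang's theorem \cite{Wan06} gives $g(K)\geq 2$, hence $\frac{1}{g(K)-1}\leq 1$ and therefore $m\leq 1$. This contradicts $m\geq 5$, and the contradiction establishes $J_K(e^{\frac{2i\pi}{5}})=1$, as claimed.

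I expect no real obstacle here: the corollary is a bookkeeping combination of the paper's Theorem \ref{thm:main_thm} with the thin specialization ($th(K)=0$) of Theorem \ref{thm:hanselman}. The only points requiring care are aligning the two slope conventions, so that $s=\frac{1}{5k}$ corresponds to $m=5|k|$, and justifying $g(K)\geq 2$; the latter is precisely where Wang's genus-one obstruction is used. As an alternative to invoking Wang, one could read off $g(K)=2$ directly from the degree-two Alexander polynomial in Hanselman's classification of thin cosmetic candidates in \cite[Theorem 3]{Han19}, but the weaker inequality $g(K)\geq 2$ already suffices to close the argument.
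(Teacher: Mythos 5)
Your proof is correct, but it takes a mildly different route from the paper's. The paper's proof is a two-line citation: it invokes the part of \cite[Theorem 3]{Han19} asserting that the only possible purely cosmetic surgery pairs for thin knots are $\lbrace \pm 1\rbrace$ and $\lbrace \pm 2\rbrace$, and since neither $1$ nor $2$ is of the form $\frac{1}{5k}$, Theorem \ref{thm:main_thm} forces $J_K(e^{\frac{2i\pi}{5}})=1$. You instead re-derive the needed slope restriction from the inequality quoted in Theorem \ref{thm:hanselman}: with $th(K)=0$ and $g(K)\geq 2$ the bound collapses to $\frac{1}{g(K)-1}\leq 1$, contradicting the denominator $5|k|\geq 5$ supplied by Theorem \ref{thm:main_thm}. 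What your route buys is self-containedness relative to what this paper actually states (Theorem \ref{thm:hanselman} plus the cited result of \cite{Wan06}), whereas the paper leans on a further statement from \cite[Theorem 3]{Han19}; the cost is a slightly longer argument. One point you should make explicit: Wang's theorem only excludes genus one, so to conclude $g(K)\geq 2$ you must also rule out $g(K)=0$, i.e.\ the unknot, which \emph{does} admit purely cosmetic pairs; this is immediate under your standing assumption, since $J_U\equiv 1$ would give $J_K(e^{\frac{2i\pi}{5}})=1$, and non-triviality of $K$ is in any case needed even to invoke Theorem \ref{thm:hanselman}.
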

\begin{proof}
It is also part of \cite[Theorem 3]{Han19} that the only possible purely cosmetic surgery pairs of thin knots are the pairs $\lbrace \pm 1\rbrace$ or $\lbrace \pm 2\rbrace.$ By Theorem \ref{thm:main_thm}, $K$ must satisfy $J_K(e^{\frac{2i\pi}{5}})=1.$
\end{proof}
Finally, let us discuss some numerical estimates of the strength of this obstruction. Knots with $J_K(e^{\frac{2i\pi}{5}})=1$ seem to become increasingly rare when the number of crossings increases. Using a census of all $9,755,328$ prime knots with at most $17$ crossings and their Jones polynomials generated with the Regina software\cite{Reg}, we found that there are only $97$ that have $J_K(e^{\frac{2i\pi}{5}})=1.$ Following Regina's notation, those are the knots:
\begin{small}
\begin{eqnarray*}8nt_{1}, 9at_{1}, 11ah_{001}, 11at_{1}, 12ah_{0001}, 12ah_{0002}, 12nh_{003}, 
13ah_{0002}, 13ah_{0004}, 13ns_{2},
\\ 14ah_{00003}, 14ah_{00005}, 14ah_{00010}, 14ah_{00012},  14ah_{00013}, 14ah_{00017}, 14ah_{00025}, 14nh_{00007}, 14nh_{00042},
\\ 15ah_{00005}, 15ah_{00007}, 15ah_{00044}, 15ah_{00049}, 15ah_{00050}, 15ah_{00070}, 15ah_{00072}, 15nh_{000019}, 15nt_{1}, 
\\16ah_{000006}, 16ah_{000009}, 16ah_{000010}, 16ah_{000019}, 16ah_{000060}, 16ah_{000062}, 16ah_{000069},
\\ 16ah_{000100}, 16ah_{000104}, 16ah_{000105}, 16ah_{000116}, 16ah_{000137}, 16ah_{000139}, 16ah_{000141},
\\ 16ah_{000153}, 16ah_{000209}, 16ah_{000253}, 16ah_{000447}, 16ah_{000505}, 16nh_{0000029}, 16nh_{0000034},
\\ 16nh_{0000092}, 16nh_{0000150}, 16nh_{0000415}, 17ah_{0000008}, 17ah_{0000010}, 17ah_{0000032}, 17ah_{0000081},
\\ 17ah_{0000112}, 17ah_{0000137}, 17ah_{0000138}, 17ah_{0000161}, 17ah_{0000170}, 17ah_{0000243},
17ah_{0000248},
\\ 17ah_{0000249}, 17ah_{0000341}, 17ah_{0000366}, 17ah_{0000368}, 17ah_{0000374}, 17ah_{0000376}, 17ah_{0000384},
\\ 17ah_{0000495}, 17ah_{0000500}, 17ah_{0000544}, 17ah_{0000545}, 17ah_{0000593}, 17ah_{0000634}, 17ah_{0000685},
\\ 17ah_{0000687}, 17ah_{0000786}, 17ah_{0000979}, 17ah_{0001175}, 17ah_{0001352}, 17ah_{0001734}, 17ah_{0001883},
\\ 17ah_{0002693}, 17ah_{0003282}, 17nh_{0000002}, 17nh_{0000034}, 17nh_{0000035}, 17nh_{0000196}, 17nh_{0000257},
\\ 17nh_{0000276}, 17nh_{0000327}, 17nh_{0000473}, 17nh_{0000765}, 17nh_{0005618}, 17nh_{0005619}
\end{eqnarray*}
\end{small}
Only two of those knots satisfy $\Delta_K''(1)=0$ and $J_K'''(1)=0,$ those are the knots $14nh_{00042}, 16ah_{000209}.$  We get the following corollary:
\begin{corollary}\label{cor:17crossings} No non-trivial knot with at most $17$ crossings admits purely cosmetic surgeries.
\end{corollary}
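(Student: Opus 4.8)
The plan is to assemble the pieces already in place: Tao's reduction to prime knots, the census of Jones values, the two classical finite-type obstructions, and the two main theorems of this paper. First, by Tao's theorem that composite knots admit no purely cosmetic surgeries \cite{Tao19}, it suffices to treat a \emph{prime} knot $K$ with crossing number $c(K)\leqslant 17$. I would then split according to the value of $J_K(e^{\frac{2i\pi}{5}})$. In the case $J_K(e^{\frac{2i\pi}{5}})\neq 1$, the Corollary above for knots with at most $31$ crossings applies verbatim, since $17<31$; there Theorem \ref{thm:main_thm}, Hanselman's inequality from Theorem \ref{thm:hanselman}, and the Lowrance/Turaev-genus bound $th(K)\leqslant g_T(K)\leqslant c(K)/2$ combine to forbid a purely cosmetic pair. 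So this case requires no new work.

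It remains to handle prime knots with $J_K(e^{\frac{2i\pi}{5}})=1$ and $c(K)\leqslant 17$. These form a finite set, and the census generated with Regina \cite{Reg} shows it consists exactly of the $97$ knots listed above. On this explicit finite list I would next apply the two classical obstructions: Boyer--Lines \cite{BL90}, which rules out a purely cosmetic pair whenever $\Delta_K''(1)\neq 0$, and Ichihara--Wu \cite{IW16}, which does so whenever $J_K'''(1)\neq 0$. A direct evaluation of $\Delta_K''(1)$ and $J_K'''(1)$ for each of the $97$ knots then eliminates every one of them except the two knots $14nh_{00042}$ and $16ah_{000209}$, which satisfy both $\Delta_K''(1)=0$ and $J_K'''(1)=0$.

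For these two surviving knots Theorem \ref{thm:main_thm} is vacuous, since both have $J_K(e^{\frac{2i\pi}{5}})=1$, so the plan is to use Theorem \ref{thm:main_thm2} at a larger prime together with Hanselman's slope restrictions. Concretely, I would compute $\widehat{HFK}$ of each knot to read off $g(K)$, $th(K)$, and the refined invariant $n_s$ from the Remark following Theorem \ref{thm:hanselman}; by that theorem and remark the only candidate purely cosmetic pairs are $\{\pm 2\}$ (possible only when $g(K)=2$) and $\{\pm\tfrac{1}{n_s}\}$. For each surviving candidate I would pick an odd prime $r\geqslant 7$ not dividing the slope denominator, compute the first $\frac{r-3}{2}$ colored Jones polynomials of $K$ at $\zeta_r^2$, assemble the vector $v_K$, and verify—using the Remark after Theorem \ref{thm:main_thm2}—that $v_K$ is \emph{not} orthogonal to $f_{\{\pm 2\}}$, respectively to $f_{\{\pm 1/\underline{n_s}\}}$; such a non-orthogonality rules out the pair. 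Since $16ah_{000209}$ is alternating, Hanselman's \cite[Theorem 3]{Han19} already forces its only candidate pairs to be $\{\pm 1\}$ or $\{\pm 2\}$, so for it only two orthogonality tests are needed.

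The hard part will be precisely these last two knots, where the level-$5$ invariant carries no information. The obstacle is twofold: extracting the exact value of $n_s$ (hence pinning the candidate slope) from the Heegaard Floer data, and then carrying out the higher-level WRT computation, namely evaluating several colored Jones polynomials of $14nh_{00042}$ and $16ah_{000209}$ at a $7$th (or higher) root of unity and checking the orthogonality conditions numerically. One should also be prepared for the possibility that $v_K$ accidentally lies orthogonal to the relevant element of $F_r$ for the first prime tried, in which case the argument would need to be repeated with a second prime $r'\geqslant 7$ with $r'\nmid n_s$; this is why Theorem \ref{thm:main_thm2} is stated for all odd primes rather than for $r=5$ alone.
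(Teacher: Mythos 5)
Your overall structure matches the paper's for the first three steps: Tao's reduction to prime knots, the $31$-crossing corollary disposing of all knots with $J_K(e^{\frac{2i\pi}{5}})\neq 1$, and the census plus the Boyer--Lines and Ichihara--Wu obstructions reducing the problem to the two knots $14nh_{00042}$ and $16ah_{000209}$. Up to that point your argument is correct and is essentially the paper's.

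The gap is in your treatment of these last two knots. What you offer there is not a proof but a program: compute $\widehat{HFK}$ to extract $g(K)$, $th(K)$ and the invariant $n_s$, then evaluate several colored Jones polynomials at a $7$th (or higher) root of unity and check non-orthogonality against the relevant vectors of $F_r$. None of these computations is actually carried out, and, as you yourself concede, the method is not guaranteed to conclude: $v_K$ could be orthogonal to the relevant element of $F_r$ for the prime you try, and nothing ensures that some larger prime succeeds. A conditional strategy of this kind cannot close the corollary. The paper's resolution is much simpler and you overlooked it: both exceptional knots have at most $16$ crossings and are prime, so they fall under Hanselman's already-completed verification in \cite{Han19} that no prime knot with at most $16$ crossings admits a purely cosmetic surgery pair (his $\widehat{HFK}$ computations exclude these two knots in particular). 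Citing that result finishes the proof with no new computation; your appeal to \cite[Theorem 3]{Han19} for the alternating knot $16ah_{000209}$ gets close to this, but even there you still leave two orthogonality tests pending rather than invoking the result that already covers both knots outright.
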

\begin{proof}
The two exceptions $14nh_{00042}$ and $16ah_{000209}$ that we found were covered by Hanselman's \cite{Han19} treatment of knots with at most $16$ crossings; for those two knots a purely cosmetic surgery can be excluded from the computation of $\widehat{HFK}.$ Moreover by \cite{Tao19}, a knot with a purely cosmetic surgery pair must be prime. 
\end{proof}
\begin{remark}Since $J_K''(1)=-3\Delta''(1)$ (see \cite[Lemma 2.1]{NW15}), computing only the Jones polynomial one can exclude all but two knots with at most $17$ crossings to have a purely cosmetic surgery. While we simply used Hanselman's previous results to exclude those two knots, we could have tried out the criterion in Theorem \ref{thm:main_thm} for $r=7$ to exclude those last two slopes instead.
\end{remark}
We note that Sikora and Tuzun \cite{ST18} have done extensive computations of Jones polynomials of knots with at most $22$ crossings, in order to verify the Jones unknot detection conjecture up to that crossing number. A numerical strategy to verify the conjecture up to some crossing number would be to test knots for the criterions $\Delta_K''(1)=0$ and $J_K'''(1)=0$ which are fast to check, if both compute whether $J_K(\zeta_5)=1,$ then for the remaining exceptions use the criterion for primes $r\geq 7.$ In forthcoming work, we plan to implement the criterion in Theorem \ref{thm:main_thm} for primes $r\geq 7$ to be able to verify the conjecture for the whole census of knots with at most $19$ crossings on Regina.

The paper is organized as follows: in Section \ref{sec:prelim}, we review the basics of the $\mathrm{SO}(3)$ WRT TQFTs that we will use. In Section \ref{sec:surgery_formula}, we derive the formula of the $\mathrm{SO}(3)$ WRT TQFTs of Dehn-surgeries of a knot from TQFT axioms. In Section \ref{sec:proofs}, we prove Theorem \ref{thm:main_thm2} using the surgery formula and deduce Theorem \ref{thm:main_thm}.

\textbf{Acknowledgements:} We would like to thank François Costantino, Andras Stipsciz and Effie Kalfagianni for their interest and helpful conservations. We also thank Cl\'ement Maria for helping us generate the census of Jones polynomials with Regina.
\section{Preliminaries}
\label{sec:prelim}
\subsection{$\mathrm{SO}_3$ WRT TQFTs and extended cobordisms}
\label{sec:extended}
In this section, we review some well-known properties of WRT TQFTs needed in this paper. Although they were initially defined by Reshetikhin and Turaev in \cite{RT91} to realize Witten's \cite{W} interpretation of the Jones polynomial as a quantum field theory based on the Chern-Simons invariant, it is really the so-called $\mathrm{SO}_3$ TQFTs in the framework of Blanchet, Habegger, Masbaum and Vogel \cite{BHMV} that we will present here. We note that the $3$-manifold invariant associated to the $\mathrm{SO}_3$ TQFT is up to normalization identical to the $\tau_r'$ invariants of Kirby and Melvin \cite{KM91}. 

The $\mathrm{SO}_3$ TQFTs in \cite{BHMV} have a so-called \textit{anomaly}, meaning that the associated representation of mapping class groups are projective. Lifting this anomaly requires to work with a category of $2+1$ cobordisms with an additional structure, either a $p_1$-structure as in \cite{BHMV}, or a so-called extended structure by an approach initiated by Walker \cite{W} and further developped by Turaev \cite{Tur}. We will use the later approach here.

 Let us define extended $3$-manifolds as pairs $(M,n)$ where $M$ is a compact oriented $3$-manifold and $n\in \Z.$ The integer $n$ is called the weight of the extended $3$-manifold, and intuitively encodes a choice of signature of a $4$-manifold bounding $M.$ A connected extended surface $(\Sigma,L)$ will consist of a compact oriented surface $\Sigma$ with a choice of Lagrangian $L\subset H_1(\Sigma,\QQ).$ Manifolds with boundary will be equiped with a weight and a Lagrangian in each boundary component. For any two extended $3$-manifolds $(M_1,n)$ and $(M_2,m)$ such that $(\Sigma,L)$ is the common boundary of $(M_1,n)$ and $(M_2,m),$ the gluing is the extended closed $3$-manifold $(M_1\underset{\Sigma}{\coprod} \overline{M_2},n+m-\mu(\lambda_{M_1}(\Sigma),L,\lambda_{M_2}(\Sigma)),$ where $\lambda_{M_i}(\Sigma)=\mathrm{Ker} H_1(\Sigma,\QQ)\rightarrow H_1(M,\QQ)$ and $\mu$ is the Maslov index, which can be computed from the following definition:
 \begin{definition}\label{def:Maslov}Let $(V,\omega)$ be a finite dimensional symplectic $\mathbb{Q}$-vector space, let $L_1,L_2,L_3$ be Lagrangians in $V$ and let $W=\lbrace (x,y) \in L_1\times L_2 \ | \ x+y\in L_3 \rbrace.$ Then the Maslov index $\mu(L_1,L_2,L_3)$ is the signature of the symmetric bilinear form $B$ on $W$ defined by $B((x_1,x_2),(y_1,y_2))=\omega(x_2,y_1).$
 \end{definition}
 We recall that the Maslov index changes sign under any odd permutation of $L_1,L_2,L_3.$ In particular, it vanishes whenever two Lagrangians are equal.
 
 Similar rules apply for the gluing of cobordisms, where the weight of the gluing has to computed using some Maslov indices. That construction applied to mapping cylinders gives rise to the so-called \textit{extended mapping class group} of surfaces, which we define below:
 \begin{definition}\label{def:extendedMCG} For $\Sigma$ a compact oriented surface, and $L\subset H_1(\Sigma,\QQ)$ a Lagrangian, the extended mapping class group $\widetilde{\mathrm{Mod}}(\Sigma)$ is the $\mathbb{Z}$-central extension of $\mathrm{Mod}(\Sigma)$ defined by the rule
 $$(f,n)\circ (g,m)=(f\circ g, n+m+\mu(L,f(L),(f\circ g)(L)))$$
 for any $f,g\in \mathrm{Mod}(\Sigma)$ and any $n,m\in \Z.$
 \end{definition}
\subsection{The $\mathrm{SO}_3$ invariants from skein theory}
\label{sec:invariants}
Here we review the TQFT constructions defined in \cite{BHMV}. We will first give the definition of the $\mathrm{SO}_3$ invariants of an extended closed $3$-manifold $(M,n)$, which are computed from the evaluation at roots of unity of the Kauffman bracket of some cablings of a surgery presentation of $M.$

 Let us fix an odd integer $r\geqslant 3,$ and $A_r$ be a primitive $2r$-th root of unity. For any integer $n\in \Z$ the quantum integer $[n]$ is given by
$$[n]=\frac{A_r^{2n}-A_r^{-2n}}{A_r^2-A_r^{-2}}$$
\begin{figure}\label{fig:Kauffman}
\begingroup%
  \makeatletter%
  \providecommand\color[2][]{%
    \errmessage{(Inkscape) Color is used for the text in Inkscape, but the package 'color.sty' is not loaded}%
    \renewcommand\color[2][]{}%
  }%
  \providecommand\transparent[1]{%
    \errmessage{(Inkscape) Transparency is used (non-zero) for the text in Inkscape, but the package 'transparent.sty' is not loaded}%
    \renewcommand\transparent[1]{}%
  }%
  \providecommand\rotatebox[2]{#2}%
  \newcommand*\fsize{\dimexpr\f@size pt\relax}%
  \newcommand*\lineheight[1]{\fontsize{\fsize}{#1\fsize}\selectfont}%
  \ifx\svgwidth\undefined%
    \setlength{\unitlength}{154.39388606bp}%
    \ifx\svgscale\undefined%
      \relax%
    \else%
      \setlength{\unitlength}{\unitlength * \real{\svgscale}}%
    \fi%
  \else%
    \setlength{\unitlength}{\svgwidth}%
  \fi%
  \global\let\svgwidth\undefined%
  \global\let\svgscale\undefined%
  \makeatother%
  \begin{picture}(1,0.36409275)%
    \lineheight{1}%
    \setlength\tabcolsep{0pt}%
    \put(0,0){\includegraphics[width=\unitlength,page=1]{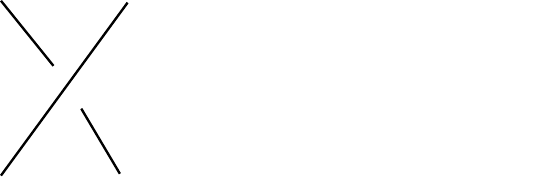}}%
    \put(0.24844794,0.18227415){\makebox(0,0)[lt]{\lineheight{1.25}\smash{\begin{tabular}[t]{l}$=A$\end{tabular}}}}%
    \put(0,0){\includegraphics[width=\unitlength,page=2]{Kauffman_rel.pdf}}%
    \put(0.63706435,0.16839498){\makebox(0,0)[lt]{\lineheight{1.25}\smash{\begin{tabular}[t]{l}$+A^{-1}$\end{tabular}}}}%
    \put(0,0){\includegraphics[width=\unitlength,page=3]{Kauffman_rel.pdf}}%
  \end{picture}%
\endgroup%

\caption{The second Kauffman relation}
\end{figure}
Let us first recall the definition of the Kauffman bracket $\langle L \rangle$ of a framed link in $S^3.$ It is an invariant of framed links completely determined by the normalization $\langle \emptyset \rangle=1,$ and the two Kauffman relations: $\langle L \cup U \rangle=(-A^2-A^{-2})\langle L \rangle,$ where $L \cup U$ denotes the disjoint union of a framed link and a $0$-framed unknot, and the second Kauffman relation relating $3$ links that differ only in a ball is shown in Figure \ref{fig:Kauffman}.
\\ Furthermore, the \textit{colored} Kauffman bracket $\langle L,c\rangle$ of a link $L$ whose components are colored by elements of $\C[z]$ is defined in the following way: if all components $L_i$'s are colored by monomials $z^{d_i}$ then just replace the component $L_i$ by $d_i$ parallel copies and compute the Kauffman bracket. Otherwise, expand by multilinearity.
\\ Now, suppose that $M$ has a surgery presentation $M=S^3(L),$ when $L$ is a framed link in $S^3.$ Let us write $r=2m+1,$ and let $\omega=\underset{i=1}{\overset{m}{\sum}}(-1)^{i-1}[i]e_i,$ when $e_i(z)$ is the $i$-th Chebyshev polynomial, defined by $e_1(z)=1, e_2(z)=z$ and $e_{i+1}(z)=ze_i(z)-e_{i+1}(z).$ We recall that $[i]=\frac{A_r^{2i}-A_r^{-2i}}{A_r^2-A_r^{-2}}$ is the $i$-th quantum integer.
\\ Finally, we introduce 
$$\eta_r=\frac{(A_r^2-A_r^{-2})}{\sqrt{-r}} \ \textrm{and} \ \kappa_r=\eta_r\langle U_-,\omega\rangle=\eta_r \underset{i=1}{\overset{\frac{r-1}{2}}{\sum}} (-A_r)^{-(i^2-1)}[i]^2,$$
where $U_-$ is the unknot with framing $-1.$ We note that $\kappa_r$ is actually a $4r$-th root of unity, whose exact order depends on $r \ \mathrm{mod} \ 4$ and the choice of $A_r.$
\begin{theorem}\label{thm:RT_invariants}\cite[Theorem 2.2.2]{Tur}
If $M$ is obtained by surgery on a framed link $L\subset S^3,$ with $n(L)$ components and signature $\sigma(L)$ then 
$$Z_r(M,\sigma(L))=\eta_r^{1+n(L)}\langle L, \omega,\omega,\ldots,\omega \rangle$$
is a topological invariant of the extended $3$-manifold $(M,\sigma(L)).$ 

Furthermore if, $M$ contains a colored link $(K,c)$ then
$$Z_r(M,\sigma(L),K,c)=\eta_r^{1+n(L)}\langle L\cup K,\omega,\ldots,\omega,c \rangle.$$
is a topological invariant of the extended $3$-manifold $(M,\sigma(L),K,c).$

In both cases, the invariant satisfies $Z_r(M,n)=\kappa_r^{-n} Z_r(M,0).$ 
\end{theorem}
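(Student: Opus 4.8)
The plan is to prove invariance by Kirby calculus. By Kirby's theorem (in the refined form for oriented manifolds), two framed links $L, L' \subset S^3$ produce orientation-preservingly homeomorphic surgered manifolds $S^3(L) \cong S^3(L')$ if and only if $L$ and $L'$ are connected by a finite sequence of handle slides and $\pm 1$-framed stabilizations (blow-ups and blow-downs of split $\pm 1$-framed unknots). Since the colored Kauffman bracket is by its very definition invariant under isotopy of framed links, it suffices to track how the quantity $\eta_r^{1+n(L)} \langle L, \omega, \ldots, \omega\rangle$ transforms under these two moves, and to package the resulting factors into the weight.

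The heart of the argument is handle-slide invariance, which is the assertion that $\omega$ is a \emph{Kirby color}: the colored bracket $\langle L, \omega, \ldots, \omega\rangle$ is unchanged when one component is slid over (band-summed with a parallel copy of) another, $\omega$-colored, component. I would reduce this to the fundamental encircling relation in the Kauffman bracket skein module of the solid torus, which expresses the effect of wrapping a meridional $\omega$-colored circle around a strand. That relation holds precisely because of the specific coefficients $(-1)^{i-1}[i]$ defining $\omega$, together with the truncation $e_r \equiv 0$ (equivalently $[r]=0$) forced at the root of unity $A_r$, which cuts off the admissible colors at $i = (r-1)/2$. I expect this encircling/sliding lemma to be the main obstacle; everything downstream is bookkeeping. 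Since a handle slide preserves both the number of components $n(L)$ and the signature $\sigma(L)$, this move changes neither side of the claimed identity.

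It then remains to analyze stabilizations. Adding a split $\pm 1$-framed unknot $U_\pm$ colored by $\omega$ multiplies $\langle L, \omega, \ldots\rangle$ by $\langle U_\pm, \omega\rangle$ and raises the power of $\eta_r$ by one, so the net factor is $\eta_r \langle U_\pm, \omega\rangle$. By definition $\kappa_r = \eta_r\langle U_-, \omega\rangle$, and the Gauss-sum identity $\langle U_+, \omega\rangle\,\langle U_-, \omega\rangle = \eta_r^{-2}$ gives $\eta_r\langle U_+, \omega\rangle = \kappa_r^{-1}$. Thus a $-1$ stabilization multiplies $\eta_r^{1+n(L)}\langle L, \ldots\rangle$ by $\kappa_r$ while decreasing $\sigma(L)$ by $1$, and a $+1$ stabilization multiplies it by $\kappa_r^{-1}$ while increasing $\sigma(L)$ by $1$. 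Consequently the combination $\kappa_r^{\sigma(L)}\,\eta_r^{1+n(L)}\langle L, \omega, \ldots, \omega\rangle$ is unchanged by every handle slide and every stabilization, hence depends only on the homeomorphism type of $M = S^3(L)$. Denoting this common value by $\widetilde{Z}(M)$, we obtain $Z_r(M, \sigma(L)) = \kappa_r^{-\sigma(L)}\widetilde{Z}(M)$, which manifestly depends only on the extended manifold $(M, \sigma(L))$; taking a signature-zero presentation shows $\widetilde{Z}(M) = Z_r(M,0)$, and therefore $Z_r(M, n) = \kappa_r^{-n} Z_r(M, 0)$ for every weight $n$.

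Finally, the version with an embedded colored link $(K, c)$ is handled by the same scheme: all Kirby moves are performed on the surgery components $L$ (colored by $\omega$), while $K$ is carried along untouched. When a handle slide, or the link $K$ itself, must pass over an $\omega$-colored component, invariance is guaranteed by the same encircling lemma, since the color of the sliding strand plays no role there. As the pair $(M, K)$ and the weight $\sigma(L)$ are preserved exactly as before, one concludes that $Z_r(M, \sigma(L), K, c)$ is a topological invariant of $(M, \sigma(L), K, c)$ satisfying the same $\kappa_r^{-n}$ scaling.
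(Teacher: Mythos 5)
Your proposal is correct and is essentially the standard argument behind this theorem, which the paper does not prove itself but quotes from Turaev's book \cite{Tur}: invariance under Kirby moves, with handle-slide invariance coming from the fact that $\omega$ is a Kirby color (the encircling lemma), and the stabilization factors $\eta_r\langle U_\pm,\omega\rangle=\kappa_r^{\mp 1}$ absorbed into the signature/weight bookkeeping, exactly as in \cite{Tur} and the skein-theoretic treatment of \cite{BHMV}. Your signature accounting ($\kappa_r^{\sigma(L)}\eta_r^{1+n(L)}\langle L,\omega,\ldots,\omega\rangle$ is the Kirby-move invariant, whence $Z_r(M,n)=\kappa_r^{-n}Z_r(M,0)$) and your treatment of the embedded colored link $(K,c)$ are both the same as in those references.
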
 
Although we defined the invariant for extended $3$-manifolds, for any $3$-manifold the quantity $Z_r(M)=Z_r(M,0)$ is a topological invariant of $M.$ The last property allows one to compute $Z_r(M)$ from any surgery presentation of $M,$ no matter the signature, by renormalizing by the appropriate power of $\kappa_r.$

\subsection{The $\mathrm{SO}_3$ WRT TQFTs}
\label{sec:TQFT}
 The $\mathrm{SO}_3$ WRT invariants $Z_r$ are part of a TQFT defined on the category of extended cobordisms in dimension $2+1.$ Objects of this category are extended surfaces and morphisms are (homeomorphism classes fixing the boundary of) extended cobordism of dimension $3,$ as introduced in Section \ref{sec:extended}. The disjoint union gives the category a monoidal structure, and the invariant $Z_r$ can be extended to a monoidal functor from the category of extended $2+1$-cobordisms to the category of complex vector spaces. Concretely speaking, we have the following:
\begin{theorem}\cite{BHMV}\label{thm:TQFT} For any odd integer $r\geqslant 3,$ we have:
\begin{itemize}
\item[-]For any closed compact oriented $3$-manifold $M,$ $Z_r(M)=Z_r(M,0)$ is a $\C$-valued topological invariant. 
\item[-]For every closed compact oriented extended surface $(\Sigma,L),$ $Z_r(\Sigma,L)$ is a finite dimensional vector space with a natural Hermitian form $\langle\langle \cdot , \cdot \rangle\rangle.$
\item[-]For any compact oriented extended $3$-manifold $M=(M,0),$ containing a link $K\subset M,$ and with a fixed homeomorphism $\partial M \simeq \Sigma,$ and a choice of Lagrangian $L\subset H_1(\Sigma,\QQ),$ $Z_r(M,K)$ is a vector in $Z_r(\Sigma,L),$ and $Z_r(\Sigma,L)$ is spanned by such vectors.
\item[-]The extended mapping class group $\widetilde{\mathrm{Mod}}(\Sigma),$ acting on extended $3$-manifolds with boundary $(\Sigma,L)$ gives rise to a representation 
$$\rho_r:\widetilde{\mathrm{Mod}}(\Sigma) \longmapsto \mathrm{Aut}(Z_r(\Sigma,L))$$
called the $\mathrm{SO}_3$ quantum representation.
\item[-]For any two extended $3$-manifolds (possibly with links) $(M_1,0),(M_2,0)$ with $\partial M_1\simeq \partial M_2 \simeq (\Sigma,L)$ the underlying closed $3$-manifold $M=M_1 \underset{\Sigma}{\coprod} \overline{M_2}$ has invariant
$$Z_r(M)=\kappa_r^{\mu(\lambda_{M_1}(\Sigma),L,\lambda_{M_2}(\Sigma))}\langle\langle Z_r(M_1) , Z_r(M_2) \rangle\rangle.$$
\end{itemize}
\end{theorem}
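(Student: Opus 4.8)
The plan is to realize $Z_r$ as part of a TQFT through the \emph{universal construction} of \cite{BHMV}, taking the closed-manifold invariant of Theorem~\ref{thm:RT_invariants} as the sole input. First I would fix an extended surface $(\Sigma,L)$ and form the free $\C$-vector space $\mathcal{V}(\Sigma,L)$ spanned by all compact oriented extended $3$-manifolds $(M,0)$ (possibly carrying a colored framed link) equipped with an identification $\partial M\cong(\Sigma,L)$. On $\mathcal{V}(\Sigma,L)$ I would define a sesquilinear pairing by gluing representatives and taking the closed-manifold invariant,
$$\langle\langle M_1,M_2\rangle\rangle=\kappa_r^{-\mu(\lambda_{M_1}(\Sigma),L,\lambda_{M_2}(\Sigma))}\,Z_r\!\left(M_1\underset{\Sigma}{\coprod}\overline{M_2}\right),$$
the power of $\kappa_r$ being the anomaly correction prescribed by the gluing rule of Section~\ref{sec:extended} and the weight behaviour $Z_r(M,n)=\kappa_r^{-n}Z_r(M,0)$, chosen so that the gluing formula of the last bullet holds by construction. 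The TQFT vector space is then the quotient $Z_r(\Sigma,L):=\mathcal{V}(\Sigma,L)/\ker\langle\langle\cdot,\cdot\rangle\rangle$ by the radical; the induced form is nondegenerate by construction, and it is Hermitian because $Z_r(\overline{M})=\overline{Z_r(M)}$ and $\mu$ is antisymmetric. Since classes $Z_r(M,K)$ generate $\mathcal{V}(\Sigma,L)$, they span $Z_r(\Sigma,L)$, which gives the second and third bullets.

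The crux of the argument is \textbf{finite-dimensionality}. Here I would present $\Sigma$ as the boundary of a handlebody $H$ and show that the vectors $Z_r(H,c)$ span $Z_r(\Sigma,L)$ as $c$ runs over a finite set of \emph{admissible} colorings: one colors the edges of a trivalent spine of $H$ by Chebyshev colors $e_i$ and imposes the usual admissibility conditions at the vertices. The essential skein-theoretic input is that at the root of unity $A_r$ the Jones--Wenzl idempotent whose quantum dimension $[r]$ vanishes is negligible, so the admissible colors truncate to the finite set $\{e_1,\dots,e_{(r-1)/2}\}$. Combined with the fact that the Kauffman bracket skein module of $H$ is generated by these colored spines, this produces a finite spanning set and hence $\dim Z_r(\Sigma,L)<\infty$. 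This truncation, which rests on the representation theory of the Temperley--Lieb category at $A_r$, is the one genuinely substantive step; everything else is formal bookkeeping with the anomaly, and I expect it to be the main obstacle.

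The remaining bullets follow by transporting the universal construction. A cobordism $W$ from $(\Sigma_1,L_1)$ to $(\Sigma_2,L_2)$ acts on representatives by gluing, and I would check that this descends to a well-defined map $Z_r(W)\colon Z_r(\Sigma_1,L_1)\to Z_r(\Sigma_2,L_2)$ (radical vectors glue to radical vectors) that is functorial under composition once weights are tracked through the Maslov cocycle. Disjoint union is multiplicative for $Z_r$, which yields the monoidal isomorphism $Z_r(\Sigma_1\sqcup\Sigma_2)\cong Z_r(\Sigma_1)\otimes Z_r(\Sigma_2)$. For the mapping class group action I would send a mapping class $f$ to the linear map induced by its mapping cylinder, viewed as an extended cobordism; the central extension defining $\widetilde{\mathrm{Mod}}(\Sigma)$ in Definition~\ref{def:extendedMCG} is built from exactly the Maslov cocycle $\mu(L,f(L),(f\circ g)(L))$ that the anomaly contributes when two mapping cylinders are composed, so the assignment $(f,n)\mapsto\kappa_r^{n}\rho_r(f)$ is a genuine, non-projective representation. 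Finally, the gluing formula of the last bullet is the defining pairing of the first paragraph read backwards: the weight of $M_1\cup_\Sigma\overline{M_2}$ is $-\mu(\lambda_{M_1}(\Sigma),L,\lambda_{M_2}(\Sigma))$, so $Z_r$ of the closed manifold equals $\kappa_r^{\mu}$ times $\langle\langle Z_r(M_1),Z_r(M_2)\rangle\rangle$, exactly as stated.
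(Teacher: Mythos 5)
The paper offers no proof of this theorem: it is quoted directly from \cite{BHMV}, with the anomaly recast in the Walker--Turaev extended-manifold language (weights and Lagrangians) rather than BHMV's $p_1$-structures. Your proposal is a faithful sketch of exactly the construction used in those references---the universal construction applied to the closed-manifold invariant of Theorem~\ref{thm:RT_invariants}, with the pairing's anomaly correction chosen to make the gluing axiom hold, and with finite-dimensionality obtained from the truncation of colors to $e_1,\dots,e_{(r-1)/2}$ coming from the negligible Jones--Wenzl idempotent at $A_r$---so it takes essentially the same route as the source the paper relies on, and it correctly identifies the spanning/truncation argument as the only genuinely substantive step.
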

In this paper, we will only compute WRT invariants of manifolds that are Dehn surgery of knots rather than links, and will use the TQFT properties of $Z_r,$ to compute them. That, we will use the last point of Theorem \ref{thm:TQFT} in the special case where $\Sigma$ is a torus, $M_1$ is a knot complement in $S^3$ and $M_2$ is a solid torus. Therefore we will now focus on the TQFT space and quantum representation of the torus.
\subsection{Quantum representations of the torus}
\label{sec:quantum_rep}
We first describe a basis of the $Z_r$ TQFT-space of the torus $\mathbb{T}^2.$ We will fix the choice of Lagrangian in $H_1(\mathbb{T}^2,\QQ)$ to be the subspace generated by the class of the meridian $S^1\times \lbrace 0 \rbrace,$ and therefore no longer make reference of the choice of Lagrangian in this section.

If $1\leqslant i \leqslant \frac{r-1}{2},$ let $f_i$ be the vector in $Z_r(\mathbb{T}^2)$ corresponding to the solid torus $D^2\times S^1$ with boundary $T^2$ and meridian $S^1 \times \lbrace 0 \rbrace,$ containing the framed knot $\lbrace [0,\varepsilon] \rbrace \times S^1$ colored by $e_i(z).$ In particular, $f_1$ corresponds to the solid torus with the empty link inside, since $e_1(z)=1.$
\begin{proposition}\cite[Corollary 4.10]{BHMV} Let $r=2m+1 \geqslant 3$ be an odd integer. The vector space $Z_r(\mathbb{T}^2)$ admits $f_1,\ldots,f_{m}$ as an orthonormal basis.
\end{proposition}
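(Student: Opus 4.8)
The plan is to prove orthonormality by a direct gluing computation, and to obtain the spanning statement from the skein module of the solid torus together with the relations that hold at a $2r$-th root of unity.

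First I would compute the Hermitian pairings $\langle\langle f_i, f_j\rangle\rangle$ using the gluing formula (the last item of Theorem \ref{thm:TQFT}). The vector $f_i$ is represented by the solid torus $V$ with its core colored by $e_i$; gluing $V$ to the orientation-reversed copy $\overline{V}$ along the identity of the boundary torus produces the double of the solid torus, which is $S^2\times S^1$, with the two cores becoming two parallel fibers $\{pt\}\times S^1$ colored by $e_i$ and $e_j$. For a solid torus the Lagrangian $\lambda_V(\mathbb{T}^2)=\ker(H_1(\mathbb{T}^2,\QQ)\to H_1(V,\QQ))$ is exactly the span of the meridian, i.e. the chosen $L$, so all three Lagrangians entering the Maslov index coincide and $\mu(L,L,L)=0$ by Definition \ref{def:Maslov}; hence the anomaly factor $\kappa_r^{\mu}$ is trivial and $\langle\langle f_i,f_j\rangle\rangle = Z_r(S^2\times S^1, e_i, e_j)$.

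Next I would evaluate $Z_r(S^2\times S^1, e_i, e_j)$ with the surgery formula of Theorem \ref{thm:RT_invariants}. Presenting $S^2\times S^1$ as $0$-surgery on an unknot $C$, the two fibers become two parallel meridians $m_1,m_2$ of $C$, so that $Z_r(S^2\times S^1, e_i, e_j) = \eta_r^2\langle C(\omega)\cup m_1(e_i)\cup m_2(e_j)\rangle$, using $\sigma=0$ and $n(L)=1$. Since $m_1$ and $m_2$ cobound an annulus, I can fuse them by the ordinary multiplication $e_ie_j=\sum_k e_k$ in the skein algebra $\C[z]$ of the annulus, where $k$ runs over $|i-j|+1,\,|i-j|+3,\,\ldots,\,i+j-1$, reducing everything to single Hopf pairings $\langle C(\omega)\cup m(e_k)\rangle$. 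The crux, and the step I expect to be the main obstacle, is the sliding property of the Kirby color $\omega$: for $1\le k\le r-1$ an $\omega$-colored meridian around a strand colored $e_k$ evaluates to $\eta_r^{-2}\delta_{k,1}$. The case $k=1$ is the identity $\langle C(\omega)\rangle=\sum_{i=1}^m [i]^2=\eta_r^{-2}$ (equivalently the normalization $Z_r(S^2\times S^1)=1$), while the vanishing for $2\le k\le r-1$ is the orthogonality relation for the quantum $S$-matrix. As $i,j\le m$ forces every $k$ in the fusion to satisfy $k\le i+j-1\le r-2$, and $e_1$ occurs in $e_ie_j$ precisely when $i=j$, I conclude $\langle\langle f_i,f_j\rangle\rangle=\eta_r^2\cdot\eta_r^{-2}\cdot\delta_{ij}=\delta_{ij}$. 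This establishes that $f_1,\ldots,f_m$ is orthonormal, in particular linearly independent.

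Finally I would show that the $f_i$ span $Z_r(\mathbb{T}^2)$. By the third item of Theorem \ref{thm:TQFT} the space is spanned by vectors arising from solid tori with colored cores, and the skein module of the solid torus is $\C[z]$ with basis $\{e_i\}_{i\ge 1}$, so $\{f_i\}_{i\ge 1}$ spans. It then remains to reduce $f_i$ for $i>m$ to the range $i\le m$, which follows from the relations $f_r=0$ and $f_{r-i}=-f_i$; these rest on the symmetries $[r]=0$ and $[r-i]=-[i]$, both consequences of $A_r^{2r}=1$, and they express each $f_i$ as $0$ or $\pm f_j$ with $j\le m$. Combined with the orthonormality above, this shows that $f_1,\ldots,f_m$ is an orthonormal basis of $Z_r(\mathbb{T}^2)$.
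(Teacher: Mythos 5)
The paper offers no proof of this proposition at all---it is imported verbatim from \cite[Corollary 4.10]{BHMV}---so your attempt can only be measured against the standard argument, which is indeed the one you follow: compute the Hermitian pairing by gluing two solid tori into $S^2\times S^1$, and reduce spanning to the skein module of the solid torus. The orthonormality half is essentially correct: the Maslov index vanishes because all three Lagrangians are the meridian span, the surgery formula gives $\langle\langle f_i,f_j\rangle\rangle=\eta_r^2\langle C(\omega)\cup m_1(e_i)\cup m_2(e_j)\rangle$, and fusion plus the evaluation of $\omega$-encircled colors yields $\delta_{ij}$. However, your key lemma is misstated: it is \emph{false} that the $\omega$-colored meridian kills $e_k$ for all $2\le k\le r-1$. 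A direct computation ($\langle C(\omega)\cup m(e_k)\rangle=(-1)^{k-1}\sum_{i=1}^{m}[i][ik]$, then summing roots of unity) gives $\eta_r^{-2}$ whenever $k\equiv\pm1 \ \mathrm{mod}\ r$, so both $k=1$ and $k=r-1$ survive: $e_{r-1}$ is the transparent color of the $\mathrm{SO}_3$ theory at odd $r$. Your application is saved only because $i,j\le m$ forces every fusion output to satisfy $k\le i+j-1\le r-2$, as you note; but the lemma you invoke should be stated in that range (or as vanishing for $k\not\equiv\pm1\ \mathrm{mod}\ r$), since in the generality you state it, it is wrong.

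The spanning half contains a genuine gap and a sign error. The gap: the third item of Theorem \ref{thm:TQFT} says $Z_r(\mathbb{T}^2)$ is spanned by vectors of \emph{arbitrary} extended $3$-manifolds with torus boundary containing links, not by solid tori. To reduce to solid tori you must argue that every compact oriented $M$ with $\partial M=\mathbb{T}^2$ is obtained by surgery on a framed link $L$ in the interior of the solid torus $V$, and that in the TQFT the vector of $V(L)$ equals, up to a power of $\kappa_r$, the vector of $V$ containing $L$ colored by $\omega$; this is where Theorem \ref{thm:RT_invariants} (the universal construction) actually enters, and you assert the conclusion without this step. The error: in the paper's conventions, where the $e_i$-colored unknot evaluates to $(-1)^{i-1}[i]$ and the colored Hopf link to $(-1)^{i+j}[ij]$, the correct relations are $f_r=0$, $f_{r-i}=+f_i$ and $f_{r+i}=-f_i$: the sign flip $[r-i]=-[i]$ that you invoke is cancelled by the sign $(-1)^{i+j}$ in the evaluations. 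For instance at $r=5$, fusing $e_4e_1=e_4$ and $e_4e_2=e_3+e_5$ gives $\langle\langle f_4,f_j\rangle\rangle=\delta_{1j}$, so $f_4=f_1$, not $-f_1$. The sign is immaterial for spanning, but the relation as stated is false; moreover your two relations only treat indices up to $r$, and reducing an arbitrary element of $\C[z]$ also needs the relations for $i>r$ (e.g. $f_{r+i}=-f_i$ and $2r$-periodicity).
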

With this in mind, we will now describe the quantum representations of the torus, in the above basis.
\\ Let us recall that the mapping class group of the torus is isomorphic to $\mathrm{SL}_2(\Z),$ and generated by the two matrices $T=\begin{pmatrix}
1 & 1 \\ 0 & 1
\end{pmatrix}$ and $S=\begin{pmatrix}
0 & -1 \\ 1 & 0
\end{pmatrix}.$ As mapping classes, $T$ corresponds to the Dehn-twist along the meridian $S^1 \times \lbrace 0 \rbrace$ of $\mathbb{T}^2=S^1 \times S^1,$ and $S$ to the map $S(u,v)=(-v,u)$ of order $4.$ A presentation of $\mathrm{SL}_2(\Z)$ is then given by $$\mathrm{SL}_2(\Z)=\langle S,T | S^4=1,S^2=(ST)^3 \rangle.$$
In the extended mapping class group, the relations turn into
$$(S,0)^4=(Id,0), \  (S,0)^2=(S^2,0) \ \textrm{and} \ ((S,0)(T,0))^3=(S^2,1)$$
Indeed, let $a,b$ be the basis of $H_1(\mathbb{T}^2,\QQ)$ given by the class of the meridian $S^1\times \lbrace 0 \rbrace$ and longitude $\lbrace 0 \rbrace \times S^1.$ We have the following quick rule to compute Maslov indices in the case of the torus:
\begin{lemma}\label{lemma:Maslov}Let $L_1=\mathrm{Span}(x),$ $L_2=\mathrm{Span}(y)$ and $L_3=\mathrm{Span}(z)$ be Lagrangians of $H_1(\mathbb{T}^2,\QQ).$ Then if any two of $x,y,z$ are linearly dependent then $\mu(L_1,L_2,L_3)=0$ and else if $z=\alpha x+\beta y$ then 
$$\mu(L_1,L_2,L_3)=\mathrm{sign}(\alpha\beta \omega(x,y)).$$
\end{lemma}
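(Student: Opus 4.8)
The plan is to compute the Maslov index directly from Definition \ref{def:Maslov}, specializing to the two-dimensional symplectic space $H_1(\mathbb{T}^2,\QQ)$ with its intersection form $\omega$. First I would dispose of the degenerate cases: if any two of $x,y,z$ are linearly dependent, then two of the three Lagrangians $L_1,L_2,L_3$ coincide (since each is a line, and two lines in a plane agree exactly when their spanning vectors are proportional), and by the remark recalled just before the lemma the Maslov index vanishes whenever two Lagrangians are equal. So the content is entirely in the nondegenerate case, where $x,y$ form a basis of $H_1(\mathbb{T}^2,\QQ)$ and $z=\alpha x+\beta y$ with $\alpha,\beta\neq 0$.

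In that case I would unwind the definition of the space $W=\lbrace (u,v)\in L_1\times L_2 \mid u+v\in L_3\rbrace$. Writing a general element of $L_1$ as $sx$ and of $L_2$ as $ty$, the constraint $sx+ty\in L_3=\mathrm{Span}(z)$ becomes $sx+ty=\lambda(\alpha x+\beta y)$ for some scalar $\lambda$; comparing coefficients in the basis $(x,y)$ gives $s=\lambda\alpha$ and $t=\lambda\beta$. Hence $W$ is one-dimensional, parametrized by $\lambda$, with $(u,v)=(\lambda\alpha x,\lambda\beta y)$. I would then evaluate the bilinear form $B$ on this line: for $(u_1,v_1)$ and $(u_2,v_2)$ corresponding to parameters $\lambda_1,\lambda_2$, we have $u_1=\lambda_1\alpha x$, $v_1=\lambda_1\beta y$ (and similarly for the index $2$ vectors), so $B((u_1,v_1),(u_2,v_2))=\omega(v_1,u_2)=\omega(\lambda_1\beta y,\lambda_2\alpha x)=\lambda_1\lambda_2\,\alpha\beta\,\omega(y,x)=-\lambda_1\lambda_2\,\alpha\beta\,\omega(x,y)$.

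Therefore on the one-dimensional space $W$ the form $B$ is, up to the positive factor $\lambda_1\lambda_2$ on the diagonal, multiplication by $-\alpha\beta\,\omega(x,y)$, and its signature is $\mathrm{sign}(-\alpha\beta\,\omega(x,y))$. This differs by a sign from the asserted formula $\mathrm{sign}(\alpha\beta\,\omega(x,y))$, so I would need to reconcile the orientation conventions: the discrepancy is exactly the sign built into the ordering of the arguments $\omega(x_2,y_1)$ versus $\omega(x_1,y_2)$ in $B$ in Definition \ref{def:Maslov}. The main obstacle is thus not the computation itself but pinning down this sign convention consistently, i.e.\ checking whether $B((x_1,x_2),(y_1,y_2))=\omega(x_2,y_1)$ as written pairs the $L_2$-component of the first vector with the $L_1$-component of the second (which is what I used above) or the reverse, and ensuring the final answer matches the sign convention for $\omega(x,y)$ fixed by the basis $(a,b)=(\text{meridian},\text{longitude})$. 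Once that bookkeeping is fixed, the formula $\mu(L_1,L_2,L_3)=\mathrm{sign}(\alpha\beta\,\omega(x,y))$ follows, and I would sanity-check it against the known relation $((S,0)(T,0))^3=(S^2,1)$ in the extended mapping class group to confirm the sign is correct.
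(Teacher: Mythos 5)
Your computation is correct, and it is exactly the argument the paper intends: the paper's own proof of this lemma is the single line ``Follows from Definition \ref{def:Maslov},'' so your unwinding of $W$ and $B$ is the real content, and your treatment of the degenerate cases is fine. Moreover, the sign discrepancy you flagged is genuine, not a slip on your part: taking Definition \ref{def:Maslov} literally, with $B((x_1,x_2),(y_1,y_2))=\omega(x_2,y_1)$, one gets $\mu(L_1,L_2,L_3)=\mathrm{sign}(-\alpha\beta\,\omega(x,y))$, the opposite of the lemma. The reconciliation is that the definition should read $B((x_1,x_2),(y_1,y_2))=\omega(x_1,y_2)$; on $W$ one has $\omega(x_1,y_2)+\omega(x_2,y_1)=0$ (expand $\omega(x_1+x_2,y_1+y_2)=0$ using that all three subspaces are isotropic --- the same identity that shows $B$ is symmetric), so the two choices of $B$ are negatives of each other, and the corrected choice yields $\mathrm{sign}(\alpha\beta\,\omega(x,y))$ as stated. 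In other words, the typo is in Definition \ref{def:Maslov}, and the lemma's sign is the one used consistently throughout the rest of the paper.

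The one real weakness in your proposal is the proposed sanity check: you cannot anchor the sign to the displayed relation $((S,0)(T,0))^3=(S^2,1)$, because that display is itself erroneous --- three lines below it the paper computes $(ST,0)^3=(S^2,-1)$, and it is this latter value that the TQFT forces. Indeed, $((S,0)(T,0))^3=(S^2,\mu(a,b,-a+b))$, and combining $\rho_r((ST)^3)=\kappa_r^{-1}\mathrm{Id}$ from Lemma \ref{lemma:quantum_rep} with $\rho_r((\mathrm{Id},1))=\kappa_r\,\mathrm{Id}$ from Proposition \ref{prop:quantum_rep} forces $\mu(a,b,-a+b)=-1$, which is $\mathrm{sign}(\alpha\beta\,\omega(a,b))$ for $\alpha=-1$, $\beta=1$, $\omega(a,b)=1$, i.e.\ the lemma's sign rather than the one your literal computation produced; the Maslov indices appearing in Propositions \ref{prop:surg_formula1} and \ref{prop:surgery_formula2} confirm the same convention. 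So if you had carried out your check against the displayed relation, you would have been led to the wrong conclusion (that the lemma, rather than the definition, carries the typo). Anchoring instead to Lemma \ref{lemma:quantum_rep} resolves the bookkeeping, and with that your computation becomes a complete proof.
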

\begin{proof}
Follows from Definition \ref{def:Maslov}.
\end{proof}
To simplify notations, we will write $\mu(x,y,z)$ for $\mu(\mathrm{Span}(x),\mathrm{Span}(y),\mathrm{Span}(z)).$
We can compute that $(S,0)^2=(S^2,\mu(a,b,a))=(S^2,0)$ and $(S,0)^4=(S^4,\mu(a,a,a))=(Id,0).$ Moreover, $(S,0)(T,0)=(ST,\mu(a,b,a))=(ST,0),$ $(ST,0)^2=((ST)^2,\mu(a,b,-a+b))=((ST)^2,-1),$ and $(ST,0)^3=((ST)^3,-1+\mu(a,-a+b,-a))=(S^2,-1).$
\begin{proposition}\cite[Section 2]{Gil99}\label{prop:quantum_rep} We have, in the basis $e_i,$ that
$$\rho_r((T,0))f_i=(-A_r)^{i^2-1}f_i, \ \textrm{and} \ \rho_r((S,0))f_i=\eta_r \underset{1\leqslant j \leqslant \frac{r-1}{2}}{\sum}(-1)^{i+j}[ij] f_j$$
and moreover $\rho_r((Id,1))=\kappa_r Id.$
\end{proposition}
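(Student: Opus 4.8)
The plan is to read off all three operators directly from the skein-theoretic description of the basis, where $f_i$ is the class of the solid torus $D^2 \times S^1$ whose core $\{0\}\times S^1$ is colored by $e_i$. Since $f_1,\dots,f_m$ is an orthonormal basis, it suffices to compute each matrix coefficient $\langle\langle \rho_r(\phi)f_i, f_j\rangle\rangle$, and for this I would use the gluing property (the last point of Theorem \ref{thm:TQFT}) to convert such a coefficient into the invariant of a closed $3$--manifold, which is then evaluated by Theorem \ref{thm:RT_invariants} as a colored Kauffman bracket. The only genuinely new input needed beyond this dictionary is the twist eigenvalue of $e_i$ and the colored Hopf link bracket, both of which are standard.

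\textbf{The $T$--action.} The mapping class $(T,0)$ is the Dehn twist along the meridian $S^1\times\{0\}$, which is precisely the chosen Lagrangian. Applying it to $\partial(D^2\times S^1)$ and re--gluing inserts one curl into the framing of the core while leaving the coloring $e_i$ fixed; hence $\rho_r((T,0))f_i$ is represented by the same solid torus with a single twist on a strand colored by $e_i$. It therefore suffices to recall that $e_i$ is an eigenvector of the framing--change operator in the skein module of the annulus with eigenvalue $(-A_r)^{i^2-1}$, which follows by induction on $i$ from the Chebyshev recursion $e_{i+1}=ze_i-e_{i-1}$ and the normalization $-A_r^3$ (the case $i=2$) for a curl on a single strand.

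\textbf{The $S$--action.} Here I would compute $\langle\langle \rho_r((S,0))f_i, f_j\rangle\rangle$ geometrically. Gluing the solid torus carrying $e_i$ to the orientation--reversed solid torus carrying $e_j$ along the boundary homeomorphism $S$, which interchanges meridian and longitude, produces $S^3$, and the two cores become the two components of a Hopf link colored by $e_i$ and $e_j$ (gluing by the identity would instead give $S^2\times S^1$ with parallel cores). The gluing formula of Theorem \ref{thm:TQFT} then gives this coefficient as $\kappa_r^{\mu}\,Z_r(S^3,\mathrm{Hopf}(e_i,e_j))$ for the relevant Maslov index $\mu$. By Lemma \ref{lemma:Maslov} this index vanishes, since $S$ sends the meridian Lagrangian of one factor to the longitude, while the reference Lagrangian is again the meridian, so two of the three lines coincide. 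By Theorem \ref{thm:RT_invariants} applied to the empty surgery presentation, $Z_r(S^3,\mathrm{Hopf}(e_i,e_j))=\eta_r\langle \mathrm{Hopf}(e_i,e_j)\rangle$, and since $e_i$ is the $(i-1)$--st Chebyshev polynomial the standard Hopf evaluation gives $\langle \mathrm{Hopf}(e_i,e_j)\rangle = (-1)^{(i-1)+(j-1)}\frac{A_r^{2ij}-A_r^{-2ij}}{A_r^2-A_r^{-2}}=(-1)^{i+j}[ij]$. Orthonormality of the $f_j$ then yields the stated $S$--matrix.

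\textbf{The central element and the main obstacle.} The identity $\rho_r((Id,1))=\kappa_r\,Id$ is the anomaly normalization: shifting the weight of a bounding manifold by $1$ multiplies its invariant by $\kappa_r$, matching the relation $Z_r(M,n)=\kappa_r^{-n}Z_r(M,0)$ of Theorem \ref{thm:RT_invariants} once one accounts for the orientation reversal of the second piece in the gluing. The cleanest way to fix all signs simultaneously is to feed the two formulas above into the extended relations computed before Proposition \ref{prop:quantum_rep}, namely $(S,0)^4=(Id,0)$ and $((S,0)(T,0))^3=(S^2,1)$: Verlinde orthogonality of the quantum integers gives $\rho_r((S,0))^2=\rho_r((S^2,0))=Id$, whereupon the Gauss--sum identity $(\rho_r((S,0))\rho_r((T,0)))^3=\kappa_r\,Id$ forces $\rho_r((Id,1))=\kappa_r\,Id$. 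I expect the main obstacle to be exactly this anomaly bookkeeping: pinning down the powers of $\kappa_r$ and the choice of $\eta_r$ versus its conjugate arising from the Maslov corrections in the gluing formula and from the orientation reversal, so that every phase comes out precisely as stated. The skein content of the $T$-- and $S$--actions is routine by comparison; verifying the result against the central relations above is the reliable way to guarantee these normalizations are consistent.
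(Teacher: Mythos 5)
The paper itself offers no proof of this proposition---it is quoted from \cite[Section 2]{Gil99}---so there is no internal argument to compare against; your route (twist eigenvalue for $T$, Hopf-link pairing for $S$, group relations to pin the anomaly) is the standard one in the literature, and your treatment of the first two formulas is essentially correct. In particular, the vanishing of the Maslov index via Lemma \ref{lemma:Maslov} (two of the three Lagrangians coincide), the evaluation $Z_r(S^3,\mathrm{Hopf}(e_i,e_j))=\eta_r(-1)^{i+j}[ij]$ via Theorem \ref{thm:RT_invariants} with empty surgery link, and the extraction of matrix coefficients by orthonormality all work as you say. One caveat on the $T$-action: the twist eigenvalue $(-A_r)^{i^2-1}$ does \emph{not} follow by induction from the Chebyshev recursion alone, because the framing-change operator on the skein of the annulus is not multiplicative (twisting $d$ parallel strands introduces mutual crossings); e.g.\ $z^2=e_1+e_3$ has twist image $e_1+A_r^8e_3$, not $(-A_r^3)^2z^2$. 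The standard proof goes through the Jones--Wenzl idempotents \cite{BHMV}, so this step should be quoted rather than ``derived'' this way; the fact itself is classical and correctly stated.

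The genuine problem is in the third part, where your argument passes through two false identities whose sign errors cancel. First, the matrix identity you call the Gauss-sum identity is $(\rho_r(S)\rho_r(T))^3=\kappa_r^{-1}\,Id$, not $\kappa_r\,Id$: this is precisely Lemma \ref{lemma:quantum_rep} of the paper (equivalently Turaev's relation $\rho_r(STS)=\kappa_r^{-1}\rho_r(T^{-1}ST^{-1})$ in \cite{Tur}), and it can be checked numerically at $r=5$, where with $A_5=e^{i\pi/5}$ one finds $(\rho_5(S)\rho_5(T))^3=e^{-i\pi/10}\,Id$ while $\kappa_5=e^{i\pi/10}$. Second, the extended relation you quote, $((S,0)(T,0))^3=(S^2,1)$, is a typo in the paper: the paper's own Maslov computation immediately below that display gives $(ST,0)^3=(S^2,-1)$, which is the correct statement under Definition \ref{def:extendedMCG}, and is also forced by consistency, since $(S^2,-1)=(S^2,0)(Id,-1)$ together with $\rho_r(S^2)=Id$ and $\rho_r((ST)^3)=\kappa_r^{-1}Id$ yields exactly $\rho_r((Id,1))=\kappa_r\,Id$. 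So your strategy for the anomaly---feed the computed matrices into the extended relations---is sound, and with both signs corrected it does give the stated conclusion; but as written, the step derives a true statement from two false ones, which is the kind of bookkeeping failure you yourself flagged as the main risk. The clean repair is to use the corrected relation $(ST,0)^3=(S^2,-1)$ and either prove the Gauss-sum evaluation $(\rho_r(S)\rho_r(T))^3=\kappa_r^{-1}Id$ directly or cite it from \cite{Tur}.
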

\begin{remark} \label{rk:eigenvalues} Note that the matrix $\rho_r(T)$ is diagonal with distinct eigenvalues, since $(-A_r)$ is a primitive $r$-th root of $1.$
\end{remark}
To simplify notations, we will sometimes abusively write $\rho_r(S^{n_1}T^{n_2}\ldots S^{n_k})$ for

\noindent $\rho_r((S,0)^{n_1}(T,0)^{n_2}\ldots (S,0)^{n_k}),$ even though $\rho_r$ is defined on $\widetilde{\mathrm{Mod}}(\mathbb{T}^2)$ instead of $\mathrm{SL}_2(\Z).$ If the word in the generators $S$ and $T$ is fixed then there is no ambiguity.

The following lemma shows that the above definition yields a projective representation of $\mathrm{SL}_2(\Z),$ or a linear representation of the extended mapping class group $\widetilde{\mathrm{Mod}}(\mathbb{T}^2):$
\begin{lemma}\label{lemma:quantum_rep} The matrices $\rho_r(S)$ and $\rho_r(T)$ are unitary for $\langle\langle,\rangle\rangle,$ and we have $\rho_r(T^r)=Id,$ $\rho_r(S^2)=Id$ and $\rho_r((ST)^3)=\kappa_r^{-1} Id.$
\end{lemma}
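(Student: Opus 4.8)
The plan is to read everything off the explicit matrices of Proposition \ref{prop:quantum_rep}, combined with the relations we have just computed in $\widetilde{\mathrm{Mod}}(\mathbb{T}^2)$ and the fact that $\rho_r$ is a genuine homomorphism on $\widetilde{\mathrm{Mod}}(\mathbb{T}^2)$. I would begin with $T$, which is the easy case. In the orthonormal basis $f_1,\dots,f_m$ the matrix $\rho_r((T,0))$ is diagonal with entries $(-A_r)^{i^2-1}$, which are roots of unity and hence unimodular; a diagonal matrix with unimodular entries in an orthonormal basis is unitary. Moreover, since $-A_r$ is a primitive $r$-th root of unity by Remark \ref{rk:eigenvalues}, each entry satisfies $\bigl((-A_r)^{i^2-1}\bigr)^r=\bigl((-A_r)^r\bigr)^{i^2-1}=1$, so $\rho_r(T^r)=Id$ immediately.

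Next I would establish that $\rho_r((S,0))$, whose $(j,i)$-entry is $\eta_r(-1)^{i+j}[ij]$, is self-adjoint for $\langle\langle\cdot,\cdot\rangle\rangle$. Symmetry of the matrix is clear from $[ij]=[ji]$ and $(-1)^{i+j}=(-1)^{j+i}$, so it remains to check reality of the entries. Since $\overline{A_r}=A_r^{-1}$, both $A_r^2-A_r^{-2}$ and the numerator $A_r^{2ij}-A_r^{-2ij}$ are purely imaginary, whence $[ij]\in\RR$; and $\eta_r=(A_r^2-A_r^{-2})/\sqrt{-r}$ is the ratio of two purely imaginary quantities, so $\eta_r\in\RR$ as well. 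Thus $\rho_r((S,0))$ is a real symmetric matrix, i.e.\ self-adjoint.

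For the relation $\rho_r(S^2)=Id$ and the unitarity of $S$, I would invoke the extended mapping class group relation $(S,0)^4=(Id,0)$: applying the homomorphism $\rho_r$ gives $\rho_r((S,0))^4=Id$, so the eigenvalues of $\rho_r((S,0))$ are fourth roots of unity, and being self-adjoint they are real, hence equal to $\pm 1$. Therefore $\rho_r((S,0))^2=Id$, which is exactly $\rho_r(S^2)=Id$; and a self-adjoint involution satisfies $\rho_r(S)^\dagger\rho_r(S)=\rho_r(S)^2=Id$, so $\rho_r((S,0))$ is unitary. (One could instead prove $\rho_r(S)^2=Id$ fully self-containedly from the orthogonality relation $\eta_r^2\sum_{j}[ij][jk]=\delta_{ik}$ for quantum integers, but the eigenvalue argument sidesteps that sum once self-adjointness is in hand.)

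Finally, for the anomaly relation I would feed the computation $((S,0)(T,0))^3=(S^2,-1)$ obtained above into $\rho_r$. Writing $(S^2,-1)=(S^2,0)\circ(Id,-1)$ with $(Id,-1)$ central, and using $\rho_r((Id,1))=\kappa_r\,Id$ from Proposition \ref{prop:quantum_rep} together with $\rho_r((S,0))^2=Id$, multiplicativity yields $\rho_r((ST)^3)=\rho_r((S,0))^2\,\kappa_r^{-1}=\kappa_r^{-1}Id$. The only genuinely delicate points in this plan are the short conjugation check giving reality of $\eta_r$ and $[ij]$, and keeping the central-extension weights straight in the last step; the eigenvalue trick makes $\rho_r(S^2)=Id$ essentially free once self-adjointness is established, so I do not expect a serious obstacle.
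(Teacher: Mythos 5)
Your proof is correct, but it follows a genuinely different route from the paper's: the paper does not argue the lemma directly at all --- it simply cites \cite[Section 3.9]{Tur}, giving a dictionary between its constants ($\eta_r=\mathcal{D}^{-1}$, $\kappa_r=\Delta\mathcal{D}^{-1}$, $\rho_r((S,0))=\mathcal{D}^{-1}S$, $\rho_r((T,0))=T$) and Turaev's, where the anomaly relation is proved in the equivalent form $\rho_r(STS)=\kappa_r^{-1}\rho_r(T^{-1}ST^{-1})$. You instead derive everything from ingredients internal to the paper: the explicit matrices and $\rho_r((Id,1))=\kappa_r\,Id$ of Proposition \ref{prop:quantum_rep}, the homomorphism property of $\rho_r$ on $\widetilde{\mathrm{Mod}}(\mathbb{T}^2)$ from Theorem \ref{thm:TQFT}, and the Maslov-index computations preceding the lemma. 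Your division of labor is sound: the assertions that cannot follow from functoriality alone --- unitarity, $\rho_r(T^r)=Id$ (note $T^r\neq Id$ in $\mathrm{SL}_2(\Z)$), and $\rho_r(S^2)=Id$ (note $S^2=-Id$ is a nontrivial mapping class) --- are exactly the ones you verify at the matrix level, and your spectral trick (real symmetric together with $\rho_r(S)^4=Id$ forces $\rho_r(S)^2=Id$) neatly avoids the Gauss-sum orthogonality identity that a direct verification would require. What the paper's citation buys is independence: in \cite{Tur} these relations are part of establishing that the explicit matrices define a representation in the first place, whereas your argument presupposes that fact via Theorem \ref{thm:TQFT}; within this paper's logic, where both Theorem \ref{thm:TQFT} and Proposition \ref{prop:quantum_rep} are imported as black boxes, this is not circular. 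One point worth flagging: you use $((S,0)(T,0))^3=(S^2,-1)$, which is what the paper's detailed computation gives, rather than the displayed relation $((S,0)(T,0))^3=(S^2,1)$ stated a few lines earlier in the paper --- the two differ by a sign (a typo on the paper's side), and only the choice you made is compatible with the lemma's claim $\rho_r((ST)^3)=\kappa_r^{-1}Id$ given $\rho_r((Id,1))=\kappa_r\,Id$.
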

\begin{proof}
We refer to \cite[Section 3.9]{Tur} for a proof. Note that our constants $\eta_r$ and $\kappa_r$ correspond to the constants $\mathcal{D}^{-1}$ and $\Delta \mathcal{D}^{-1},$ and our matrices $\rho_r((S,0))$ and $\rho_r((T,0))$ are the same as the matrices $\mathcal{D}^{-1}S$ and $T$ in \cite{Tur}. Finally $J=\mathrm{Id}$ in our context. Then in \cite{Tur} the equivalent relation $\rho_r(STS)=\kappa_r^{-1}\rho_r(T^{-1}ST^{-1})$ is proven.
\end{proof}
 
\section{Surgery formulas for $Z_r$}
\label{sec:surgery_formula}
In this section, we will present formulas that express the $Z_r$ invariant of a Dehn-surgery on a knot in terms of its colored Jones polynomials. We will first describe the surgery formulas for $Z_r$ for arbitrary slopes, then we will specialize to slopes of the form $\frac{1}{k}, k\in \Z$ or $\pm 2.$ We have the following:
\begin{proposition}\label{prop:surg_formula1} Let $K$ be a framed knot in $S^3,$ and let $r=2m+1\geqslant 3$ be an odd integer. Then, in the basis $e_1,\ldots,e_m,$ we have
$$Z_r(E_K)=Z_r(E_K,0)=\eta_r\begin{pmatrix}
1 \\ \langle K,2 \rangle
\\ \langle K,3 \rangle
\\ \vdots
\\ \langle K,m\rangle
\end{pmatrix}$$
Moreover, if $\phi$ is any element of $\mathrm{Mod}(\partial E_K)$ which sends the meridian of $K$ to the curve of slope $s,$ then
$$Z_r(E_K(s),0)=\kappa_r^{-\mathrm{sign}(s)}\langle\langle Z_r(E_K),\rho_r(\phi,0)e_1 \rangle\rangle,$$
where by definition $\mathrm{sign}(s)=0$ for $s\in \lbrace 0,\infty\rbrace.$
\end{proposition}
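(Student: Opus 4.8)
The plan is to derive both formulas from the gluing rule (the final item of Theorem~\ref{thm:TQFT}), by realizing each closed manifold as $E_K$ glued to a solid torus and reading off the relevant Lagrangians with Lemma~\ref{lemma:Maslov}. For the first formula, since $f_1,\dots,f_m$ is an orthonormal basis of $Z_r(\mathbb{T}^2)$, the $i$-th coordinate of $Z_r(E_K)$ is $\langle\langle Z_r(E_K),f_i\rangle\rangle$. I would glue $E_K$ to the solid torus $M_2$ defining $f_i$, matching the meridian of $K$ with the meridian $S^1\times\{0\}$ of $M_2$; the result is $S^3$ containing the core of $M_2$, which is isotopic to $K$, colored by $e_i$. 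Because $M_2$ is the standard solid torus, $\lambda_{M_2}(\mathbb{T}^2)=\mathrm{Span}(S^1\times\{0\})$ is exactly the fixed Lagrangian $L$, so two of the three Lagrangians in the gluing formula coincide and the Maslov index vanishes (the remark after Definition~\ref{def:Maslov}). Hence $\langle\langle Z_r(E_K),f_i\rangle\rangle=Z_r(S^3,0,K,e_i)$, and Theorem~\ref{thm:RT_invariants} applied to the empty surgery presentation $S^3=S^3(\emptyset)$ evaluates this to $\eta_r\langle K,i\rangle$. For $i=1$, the color $e_1=1$ is the empty cabling, so $\langle K,1\rangle=\langle\emptyset\rangle=1$ and the first entry is $\eta_r$.

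For the surgery formula I would write $E_K(s)=E_K\cup_h V$, where $V$ is the standard solid torus and $h$ sends the meridian of $V$ to the slope-$s$ curve, and then identify the vector attached to $V$ in the fixed basis as $\rho_r(\phi,0)f_1$: the filling solid torus is the standard one re-glued by $\phi$, i.e.\ the composition of the mapping cylinder of $(\phi,0)$ with the cap giving $f_1$. A key point is that this composition carries no anomaly, since the intermediate gluing happens along $(\mathbb{T}^2,L)$ and the standard cap has $\lambda=\mathrm{Span}(S^1\times\{0\})=L$, so the Maslov index of the composition vanishes and $Z_r(V,0)=\rho_r(\phi,0)f_1$ with weight $0$. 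With both pieces of weight $0$, the gluing formula gives $Z_r(E_K(s),0)=\kappa_r^{\mu(\lambda_{E_K},L,\lambda_V)}\langle\langle Z_r(E_K),\rho_r(\phi,0)f_1\rangle\rangle$. It then remains to evaluate the exponent: here $\lambda_{E_K}=\mathrm{Span}(\ell)$ (the null-homologous longitude), $L=\mathrm{Span}(m)$ (the meridian), and $\lambda_V=\mathrm{Span}(\phi(m))=\mathrm{Span}(pm+q\ell)$ with $s=p/q$. Writing $pm+q\ell=q\,\ell+p\,m$ in the basis $(\ell,m)$, Lemma~\ref{lemma:Maslov} gives $\mu=\mathrm{sign}(pq\,\omega(\ell,m))$, which with the orientation convention of our setup equals $-\mathrm{sign}(pq)=-\mathrm{sign}(s)$; when $p=0$ or $q=0$ two of the Lagrangians coincide and $\mu=0=-\mathrm{sign}(s)$. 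This produces the claimed factor $\kappa_r^{-\mathrm{sign}(s)}$.

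The main obstacle is precisely this anomaly bookkeeping: because $\rho_r$ is defined on the extended mapping class group $\widetilde{\mathrm{Mod}}(\mathbb{T}^2)$, one must check that passing from the twisted gluing to the clean expression $\rho_r(\phi,0)f_1$ introduces no stray power of $\kappa_r$ beyond the single Maslov term, and this is exactly what the weight-$0$ composition above guarantees; I would verify it by tracking weights through the extended gluing rule of Section~\ref{sec:extended} and the normalization $Z_r(M,n)=\kappa_r^{-n}Z_r(M,0)$. As a consistency check I would test the degenerate slopes: for $s=\infty$ one has $\phi=\mathrm{id}$, so $\rho_r(\phi,0)f_1=f_1$ and $\mathrm{sign}(s)=0$, whence the formula recovers $Z_r(E_K(\infty))=\langle\langle Z_r(E_K),f_1\rangle\rangle=\eta_r=Z_r(S^3)$, as it must.
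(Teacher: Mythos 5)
Your proposal is correct and takes essentially the same approach as the paper's proof: the coordinates of $Z_r(E_K)$ are obtained by gluing on the solid tori defining the $f_i$ and applying Theorem \ref{thm:RT_invariants}, and the surgery formula follows from the gluing axiom together with the quantum representation, with the identical Maslov index computation $\mu(b,a,pa+qb)=-\mathrm{sign}(s)$ via Lemma \ref{lemma:Maslov}. The only difference is that you spell out the anomaly bookkeeping (the vanishing of the auxiliary Maslov indices because the standard solid torus has Lagrangian equal to $L$), which the paper leaves implicit in invoking ``the definition of the quantum representation.''
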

\begin{proof}
The first identity expresses the fact that pairing $E_K$ with the basis vectors $e_i,$ we simply get the manifold $S^3$ with the link $K$ colored by $i$ in it. By Theorem \ref{thm:RT_invariants}, $\eta_r$ is the $Z_r$ invariant of $S^3,$ and adding a knot $K$ colored by $e_i$ multiplies the invariant by $\langle K,e_i  \rangle.$
\\ As for the second identity, recall that $f_1$ is the vector corresponding to the solid torus (with meridian  the meridian of $K$). Thus the second identity follows from the last part of Theorem \ref{thm:TQFT} and the definition of the quantum representation, once we show that the Maslov index is $-\mathrm{sign}(s).$

Since we equip the torus with the span $\mathrm{Span}(a)$ of the meridian as Lagrangian, and the longitude $b$ bounds a surface in $E_K$ while the curve of slope $s=p/q$ bounds a disk in the solid torus, this Maslov index is
$$\mu(b,a,pa+qb)=-\mathrm{sign}(p/q)=-\mathrm{sign}(s).$$
In the cases $s=0$ or $s=\infty$ the Maslov index vanishes.  
\end{proof}
\begin{remark}One may want to compare the vector $Z_r(E_K)$ with the vector $v_K$ in Theorem \ref{thm:main_thm2}. To do this, we should say that while the colored Jones polynomials are invariants of knots, the colored Kauffman brackets are invariants of framed knots only. However when talking about the cosmetic surgery problem for a knot $K$, a framing on $K$ is somewhat implicit, if we want to be able to talk about slopes on the knot.
\\ Taking the convention that we chose as framing for $K$ the longitude with zero winding number, we have for any $A\in \C,$ 
$$\langle K,e_i\rangle(A_r)=(-1)^{i-1}[i]J_{K,i}(A_r^4)=(-1)^{i-1}[i]J_{K,i}(\zeta_r^2)$$
where $[i]=\frac{A_r^{2i}-A_r^{-2i}}{A_r^2-A_r^{-2}}=\frac{\zeta_r^i-\zeta_r^{-i}}{\zeta_r-\zeta_r^{-1}}.$
\end{remark}
\begin{remark}The map $\phi\in \mathrm{Mod}(\mathbb{T}^2)$ which sends the meridian to the curve of slope $s$ is not unique. However, any two such maps differ by multiplication on the right by $T^k,$ with $k\in \Z.$ As $\rho_r(T)f_1=f_1,$ the pairing $\langle Z_r(E_K),\rho_r(\phi)f_1 \rangle$ does not depend on the choice of $\phi.$
\end{remark}
\begin{proposition}\label{prop:surgery_formula2} Let $K$ be a knot in $S^3$ then we have
$$Z_r(E_K(\tfrac{1}{k}),0)=\langle\langle Z_r(E_K),\rho_r(ST^{-k}S)f_1 \rangle\rangle$$
and 
$$Z_r(E_K(2),0)=\kappa_r^{-1}\langle\langle Z_r(E_K),\rho_r(T^2S)f_1 \rangle\rangle$$
$$Z_r(E_K(-2),0)=\kappa_r \langle\langle Z_r(E_K),\rho_r(T^{-2}S)f_1 \rangle\rangle$$
\end{proposition}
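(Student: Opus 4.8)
The plan is to deduce all three formulas from Proposition \ref{prop:surg_formula1}, so that the only work is, for each slope $s$, to produce an explicit element $\phi\in\mathrm{Mod}(\mathbb{T}^2)$ carrying the meridian to a curve of slope $s$, written as a word in $S$ and $T$, and then to compare $\rho_r(\phi,0)$ with $\rho_r$ of that word. Recall that the abbreviation $\rho_r(S^{n_1}T^{n_2}\cdots)$ stands for $\rho_r$ of the product $(S,0)^{n_1}(T,0)^{n_2}\cdots$ taken in the extended mapping class group, so it may differ from $\rho_r(\phi,0)$ by a power of $\kappa_r$ coming from the Maslov cocycle of Definition \ref{def:extendedMCG}, together with the relation $\rho_r((\mathrm{Id},1))=\kappa_r\,\mathrm{Id}$ from Proposition \ref{prop:quantum_rep}. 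The two quantities to track are therefore this weight correction and the factor $\kappa_r^{-\mathrm{sign}(s)}$ already present in Proposition \ref{prop:surg_formula1}.

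Working in the basis $a,b$ of $H_1(\mathbb{T}^2,\QQ)$, the generators act by $S(a)=b$, $S(b)=-a$, $T(a)=a$, $T(b)=a+b$. A direct computation then gives $(ST^{-k}S)(a)=-(a+kb)$, $(T^2S)(a)=2a+b$ and $(T^{-2}S)(a)=-2a+b$, so these three words send the meridian to curves spanning the slopes $\tfrac1k$, $2$ and $-2$ respectively; only the span of the filling curve matters, so the sign in front of $a+kb$ is irrelevant. This identifies the mapping class $\phi$ to feed into Proposition \ref{prop:surg_formula1} in each case.

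Next I would compute the weight corrections, using the key fact that $T$ fixes the chosen Lagrangian $L=\mathrm{Span}(a)$, so that in every Maslov index arising from Definition \ref{def:extendedMCG} two of the three slots coincide and the index vanishes, except for the single contribution produced by the two outer factors $S$ in $ST^{-k}S$. Concretely, since $(T,0)^n=(T^n,0)$ and $(T^{-k},0)(S,0)=(T^{-k}S,0)$, the only surviving term is $\mu(a,b,a+kb)$, which by Lemma \ref{lemma:Maslov} equals $\mathrm{sign}(k)$ because $\omega(a,b)=1$. Hence $(S,0)(T,0)^{-k}(S,0)=(ST^{-k}S,\mathrm{sign}(k))$, giving $\rho_r(ST^{-k}S)=\kappa_r^{\mathrm{sign}(k)}\rho_r(\phi,0)$, whereas for the two words $T^{\pm2}S$ the correction is $0$ and $\rho_r(T^{\pm2}S)=\rho_r(\phi,0)$.

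Finally I would assemble the formulas. For $s=\pm2$ there is no weight correction, so Proposition \ref{prop:surg_formula1} directly yields the factors $\kappa_r^{-\mathrm{sign}(2)}=\kappa_r^{-1}$ and $\kappa_r^{-\mathrm{sign}(-2)}=\kappa_r$. For $s=\tfrac1k$ one has $\mathrm{sign}(s)=\mathrm{sign}(k)$, so Proposition \ref{prop:surg_formula1} contributes $\kappa_r^{-\mathrm{sign}(k)}$; substituting $\rho_r(\phi,0)=\kappa_r^{-\mathrm{sign}(k)}\rho_r(ST^{-k}S)$ inside the Hermitian pairing and using that $\langle\langle\cdot,\cdot\rangle\rangle$ is conjugate-linear in its second argument with $\overline{\kappa_r}=\kappa_r^{-1}$ produces a compensating factor $\kappa_r^{+\mathrm{sign}(k)}$, so all powers of $\kappa_r$ cancel and the prefactor is $1$. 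The step I expect to be most delicate is precisely this bookkeeping in the $\tfrac1k$ case: one must combine the weight correction, the sign of the slope, and the conjugation in the second slot of $\langle\langle\cdot,\cdot\rangle\rangle$ so that the two stray factors of $\kappa_r$ annihilate. The $\pm2$ cases are comparatively immediate, since there the Maslov correction is trivial.
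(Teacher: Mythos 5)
Your proposal is correct and follows essentially the same route as the paper: both deduce the formulas from Proposition \ref{prop:surg_formula1} by computing the weights of the products $(S,0)(T,0)^{-k}(S,0)=(ST^{-k}S,\mathrm{sign}(k))$ and $(T,0)^{\pm 2}(S,0)=(T^{\pm 2}S,0)$ via the Maslov cocycle, and then cancel the two stray factors of $\kappa_r$ in the $\tfrac{1}{k}$ case using anti-linearity of $\langle\langle\cdot,\cdot\rangle\rangle$ in the second slot together with $\overline{\kappa_r}=\kappa_r^{-1}$. The only (immaterial) difference is the bracketing of the triple product, and your bookkeeping of the slope signs and weight corrections agrees with the paper's computation.
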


We note that writing the continued fraction expansion of $s=p/q,$ one can find a word in the generators $S$ and $T$ that maps the meridian to the curve of slope $s.$ Giving similar surgery formula then amounts to computing the contribution $\kappa_r^{m_s}$ coming from Maslov indices. Explicit formulas for $m_s$ can given in terms of Dedekind sums or Rademacher phi functions, for example closely related computations can be found in \cite{Jef92}, where the $Z_r$ invariant of lens spaces is computed.

Moreover, closely related surgery formulas for the $\tau_r'$ invariant for the integer and $1/k$-surgeries were computed in \cite{KM91}.

\begin{proof}First, let us show that $(S,0)(T,0)^{-k}(S,0)=(ST^{-k}S,\mathrm{sign}(k))$ and $(T,0)^{\pm 2}(S,0)=(T^{\pm 2}S,0).$ We compute that $(T,0)^{k}=(T^k,0)$ for any $k\in \Z$ by induction as $\mu(a,a,a)=0.$ Moreover,
$$(S,0)(T^{-k},0)=(ST^{-k},\mu(a,b,b))=(ST^{-k},0),$$
$$(ST^{-k},0)(S,0)=(ST^{-k}S,\mu(a,b,-a-kb))=(ST^{-k}S,\mathrm{sign}(k)),$$
and
$$(T^{\pm 2},0)(S,0)=(T^{\pm 2}S,\mu(a,a,\mp 2a-b))=(T^{\pm 2}S,0).$$
Now, by Proposition \ref{prop:surg_formula1}, we have 
\begin{multline*}Z_r(E_K(\tfrac{1}{k}))=\kappa_r^{-\mathrm{sign}(k)}\langle\langle Z_r(E_K),\rho_r(ST^{-k}S,0)f_1 \rangle\rangle
\\=\kappa_r^{-\mathrm{sign}(k)}\langle\langle Z_r(E_K),\kappa_r^{-\mathrm{sign}(k)}\rho_r(ST^{-k}S)f_1 \rangle\rangle=\langle\langle Z_r(E_K),\rho_r(ST^{-k}S)f_1 \rangle\rangle
\end{multline*}
where the last equality uses that $\langle\langle,\rangle\rangle$ is anti-linear on the right.

Similarly, we have
$$Z_r(E_K(2))=\kappa_r^{-1}\langle\langle Z_r(E_K),\rho_r(T^{2}S,0)f_1 \rangle\rangle=\kappa_r^{-1}\langle\langle Z_r(E_K),\rho_r(T^{2}S)f_1 \rangle\rangle$$
and 
$$Z_r(E_K(-2))=\kappa_r\langle\langle Z_r(E_K),\rho_r(T^{-2}S,0)f_1 \rangle\rangle=\kappa_r\langle\langle Z_r(E_K),\rho_r(T^{-2}S)f_1 \rangle\rangle.$$
\end{proof}
\section{Proof of the main theorems}
\label{sec:proofs}
Before we move to the proof of Theorem \ref{thm:main_thm2}, we will explicit the finite set $F_r$ of vectors in $Z_r(\mathbb{T}^2).$
\begin{definition}\label{def:finite_set} Let $r\geqslant 5$ be a prime. We define
$$F_r=\lbrace \rho_r(ST^{-k}S)f_1-\rho_r(ST^kS)f_1, 1\leq k \leq \tfrac{r-1}{2} \rbrace \cup \lbrace \rho_r(T^2S)f_1-\kappa_r^{-2}\rho_r(T^{-2}S)f_1 \rbrace$$ where $\rho_r(T),\rho_r(S)$ are the matrices defined in Proposition \ref{prop:quantum_rep} and $\kappa_r$ is the constant defined in Proposition \ref{lemma:quantum_rep}.
\end{definition}
It is clear from the definition that $|F_r|\leq \frac{r+1}{2}.$ The following lemma shows that $F_r$ contains only non-zero vectors:
\begin{lemma}\label{lemma:finite_set}Let $r\geq 5$ be an odd prime. Then $\rho_r(ST^{-k}S)f_1-\rho_r(ST^kS)f_1=0$ if and only if $k=0 \ \mathrm{mod} \ r.$
Moreover, $\rho_r(T^2S)f_1-\kappa_r^{-2}\rho_r(T^{-2}S)f_1\neq 0.$
\end{lemma}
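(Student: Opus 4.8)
The plan is to exploit the invertibility of $\rho_r(S)$ together with the diagonal form of $\rho_r(T)$ in order to reduce both statements to an elementary divisibility fact about the prime $r$. Throughout, write $\zeta=-A_r$, which is a primitive $r$-th root of unity by Remark \ref{rk:eigenvalues}, so that $\rho_r(T)f_j=\zeta^{j^2-1}f_j$, and set $w:=\rho_r(S)f_1$.

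\textbf{First statement.} Since $\rho_r(S)$ is invertible (indeed unitary, by Lemma \ref{lemma:quantum_rep}), the equality $\rho_r(ST^{-k}S)f_1=\rho_r(ST^kS)f_1$ is equivalent, after cancelling the outer factor $\rho_r(S)$, to $\rho_r(T^{-k})w=\rho_r(T^k)w$. By Proposition \ref{prop:quantum_rep}, $w=\eta_r\sum_{j=1}^{m}(-1)^{1+j}[j]f_j$, so its $j$-th coordinate $w_j=\eta_r(-1)^{1+j}[j]$ is nonzero for every $1\le j\le m$: one has $[j]=0$ iff $\zeta^{4j}=1$ iff $r\mid j$, which never occurs in this range. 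As $\rho_r(T)$ is diagonal, the condition $\rho_r(T^{-k})w=\rho_r(T^k)w$ is therefore equivalent to $\zeta^{2k(j^2-1)}=1$, that is $r\mid k(j^2-1)$, for every $j$. Taking $j=2$, which lies in $\{1,\dots,m\}$ because $m=\frac{r-1}{2}\ge 2$ for $r\ge 5$, gives $r\mid 3k$; since $r\ge 5$ is prime it does not divide $3$, whence $r\mid k$. Conversely, if $r\mid k$ then $\zeta^{2k(j^2-1)}=1$ for all $j$ and the two vectors coincide, which settles the first claim.

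\textbf{Second statement.} I would compute the difference coordinatewise in the same manner. Using the diagonal action of $\rho_r(T)$ on $w$, the $j$-th coordinate of $\rho_r(T^2S)f_1-\kappa_r^{-2}\rho_r(T^{-2}S)f_1$ equals $\bigl(\zeta^{2(j^2-1)}-\kappa_r^{-2}\zeta^{-2(j^2-1)}\bigr)w_j$. Suppose, for contradiction, that this vanishes for every $j$; since each $w_j\ne 0$, this forces $\kappa_r^{-2}=\zeta^{4(j^2-1)}$ to be independent of $j$. Comparing the values at $j=1$ and $j=2$ yields $\zeta^{0}=\zeta^{12}$, hence $\zeta^{12}=1$ and $r\mid 12$, contradicting the hypothesis that $r\ge 5$ is prime. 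Thus the difference is a nonzero vector.

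The argument is elementary once the reduction to a coordinatewise condition is in place; the only points requiring a little care are verifying that every coordinate $w_j$ of $\rho_r(S)f_1$ is nonzero, so that the vector equalities can be read off entry by entry, and keeping in mind that the abusive notation $\rho_r(ST^{-k}S)$ denotes the honest matrix product $\rho_r(S)\rho_r(T)^{-k}\rho_r(S)$, so that no stray powers of $\kappa_r$ intervene in this step. The essential input in both parts is identical: for $r\ge 5$ prime, $r$ divides neither $3$ nor $12$, and $j=2$ is always available as a witness since $m\ge 2$.
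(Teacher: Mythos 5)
Your proof is correct and takes essentially the same route as the paper's: both arguments rest on the diagonal action of $\rho_r(T)$ (with eigenvalues powers of the primitive $r$-th root $-A_r$) applied to $w=\rho_r(S)f_1$, the fact that every coordinate of $w$ is a nonzero multiple of a quantum integer $[j]\neq 0$, and the primality of $r\geq 5$. The only difference is packaging: the paper phrases the contradiction via eigenvectors ($w$ would be an eigenvector of $\rho_r(T^{2k})$, hence of $\rho_r(T)$ by coprimality, hence collinear to some $f_i$), whereas you read off the coordinates directly and use the witness $j=2$ together with $r\nmid 3$ and $r\nmid 12$.
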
 
\begin{proof}
Note that as $\rho_r(T^r)=Id,$ if $k=0 \ \mathrm{mod} \ r,$ then $\rho_r(ST^{\pm k}S)=\rho_r(S^2)=Id$ and thus $\rho_r(ST^{-k}S)f_1-\rho_r(ST^kS)f_1=0.$
Now, by contradiction assume $\rho_r(ST^{-k}S)f_1=\rho_r(ST^kS)f_1$ but $k\neq 0 \ \mathrm{mod}\ r.$ Then applying $\rho_r(T^kS)$ on the left we get $\rho_r(T^{2k}S)f_1=\rho_r(S)f_1.$ So $\rho_r(S)f_1$ must be an eigenvector of $\rho_r(T^{2k})$ of eigenvalue $1.$ As the order of $r$ is coprime with $2k,$ we deduce that $\rho_r(S)f_1$ must be an eigenvector of $\rho_r(T)$ of eigenvalue $1,$ thus $\rho_r(S)f_1$ must be colinear to $f_1$ by Remark \ref{rk:eigenvalues}. This is not the case: as all quantum integers $[i]=\frac{A_r^{2i}-A_r^{-2i}}{A_r^2-A_r^{-2}}$ with $1\leq i \leq \tfrac{r-1}{2}$ are non-zero, $\rho_r(S)f_1$ has non-zero coefficient along each $f_i.$ Note that we use here that $\mathrm{dim}(Z_r(\mathbb{T}^2))\geq 2$ for $r\geq 5.$

Now, by contradiction assume that $\rho_r(T^2S)f_1=\kappa_r^{-2}\rho_r(T^{-2}S)f_1,$ then similarly $\rho_r(S)f_1$ must be an eigenvector of $\rho_r(T^4),$ and thus an eigenvector of $\rho_r(T)$ as $4$ is coprime to $r$ which is the order of $T.$ Note that the diagonal coefficients $(-A_r)^{i^2-1}$ where $i=1,\ldots ,\frac{r-1}{2}$ of $\rho_r(T)$ are all distinct (see Remark \ref{rk:eigenvalues}.) Thus $\rho_r(S)f_1$ would have to be colinear to one of the $f_i$'s, which is not the case.
\end{proof}
\begin{remark}\label{rk:order3root} The proof above uses the fact that $r>3.$ In the case of $r=3,$ we would find that $\mathrm{dim}(Z_3(\mathbb{T}^2)=1,$ $\kappa_3=1, \rho_3(S)=\eta_3=1$ and $\rho_3(T)=1.$ This implies that when $r=3,$ all of the vectors in Definition \ref{def:finite_set} are zero.
\end{remark}
We are now ready to prove Theorem \ref{thm:main_thm2}:
\begin{proof}[Proof of Theorem \ref{thm:main_thm2}]Fix $r\geq 5$ an odd prime.
Let $K$ be a knot in $S^3$ with a purely cosmetic surgery pair $s>s'.$ By Theorem \ref{thm:hanselman}, the slopes $s,s'$ must be opposite and $s=\frac{1}{k}$ or $s=2.$

Let us consider the first case and assume that $r$ does not divide $k.$ We have $E_K(\tfrac{1}{k})\simeq E_K(-\tfrac{1}{k}),$ thus the $Z_r$ invariants coincide: $Z_r(E_K(\tfrac{1}{k}))=Z_r(E_K(-\tfrac{1}{k})).$  By Proposition \ref{prop:surgery_formula2} we have
$$\langle\langle Z_r(E_K),\rho_r(ST^{-k}S)f_1-\rho_r(ST^kS)f_1 \rangle\rangle=Z_r(E_K(\tfrac{1}{k}))-Z_r(E_K(-\tfrac{1}{k}))=0.$$
Note that $\rho_r(ST^{\pm k}S)f_1$ depends only on $k$ mod $r$ as $\rho_r(T^r)=Id.$ Let $l \in \lbrace \pm 1, \pm 2, \ldots , \pm \frac{r-1}{2}\rbrace$ such that $k=l \ \mathrm{mod} \ r.$ Then $Z_r(E_K)$ must be orthogonal to the vector $\rho_r(ST^{-l}S)f_1-\rho_r(ST^lS)f_1.$ Either this vector or its opposite is in the set $F_r,$ so the claim is proved in that case.

Similarly, in the case where $E_K(2)\simeq E_K(-2),$ applying Proposition \ref{prop:surgery_formula2}, we get that 
$$\langle\langle Z_r(E_K),\rho_r(T^2S)f_1-\kappa_r^{-2}\rho_r(T^{-2}S)f_1 \rangle\rangle=\kappa_r\left(Z_r(E_K(2))-Z_r(E_K(-2))\right)=0,$$
and thus $Z_r(E_K)$ is orthogonal to $\rho_r(T^2S)f_1-\kappa_r^{-2}\rho_r(T^{-2}S)f_1,$ which is in $F_r$ by definition. 
\end{proof}
\begin{remark}\label{rk:oneSlope} The proof of Theorem \ref{thm:main_thm} makes clear that if $k=\pm l \ \mathrm{mod} \ r$ with $1\leq l \leq \tfrac{r-1}{2}$ and $\lbrace \pm \frac{1}{k}\rbrace$ is a purely cosmetic surgery pair for a knot $K,$ then $Z_r(E_K)$ is orthogonal to a specific vector in $F_r:$ namely the vector $\rho_r(ST^{-l}S)f_1-\rho_r(ST^lS)f_1.$ Similarly, if $\lbrace \pm 2 \rbrace$ is a purely cosmetic surgery pair then $Z_r(E_K)$ is orthogonal to $\rho_r(T^2S)f_1-\kappa_r^{-2}\rho_r(T^{-2}S)f_1.$ 
\end{remark}

\begin{remark}\label{rk:unknot}Note that the unknot admits $\pm \frac{1}{k}$ cosmetic surgery pair for any $k\in \Z,$ and that $\pm 2$ is also a purely cosmetic surgery pair for the unknot. Therefore the vector $Z_r(E_U)$ is orthogonal to all of the vectors in $F_r.$ As $Z_r(E_U)=\rho_r(S)f_1$ is a non-zero vector, all of the vectors in $F_r$ actually lie in a codimension $1$ subspace of $Z_r(\mathbb{T}^2).$
\end{remark}
The proof of Theorem \ref{thm:main_thm} is a simple corollary of Theorem \ref{thm:main_thm2}, Remark \ref{rk:unknot} and the fact that if $r=5,$ then the dimension of $Z_r(\mathbb{T}^2)$ is $2:$
\begin{proof}[Proof of Theorem \ref{thm:main_thm}]
By Theorem \ref{thm:main_thm2}, if $K$ has a purely cosmetic surgery pair $\pm s$ then either the slope $s$ is of the form $\frac{1}{5k}$ or $Z_5(E_K)$ must be orthogonal to one of the (non-zero) vectors in $F_5.$ Note that the vector of the unknot $Z_5(E_U)=\rho_r(S)f_1\neq 0$ is orthogonal to all of the vectors in $F_5$ by Remark \ref{rk:unknot}. As $\mathrm{dim}(Z_5(\mathbb{T}^2))=\frac{5-1}{2}=2,$ the vectors in $F_5$ are all colinear and any vector orthogonal to a vector in $F_5$ must be colinear to $Z_5(E_U).$

So if the slope $s$ is not of the form $\frac{1}{5k},$ the vector $Z_r(E_K)=\eta_5\begin{pmatrix}
1 \\ -[2]J_K(\zeta_5^2)
\end{pmatrix}$ must be colinear to $Z_r(E_U)=\eta_5\begin{pmatrix}
1 \\ -[2]
\end{pmatrix}.$

 As $\eta_5$ and $[2]$ are non-zero, this implies that $J_K(\zeta_5^2)=1.$ By Galois action, we also have $J_K(\zeta_5)=1.$
\end{proof}
\def\cprime{$'$}
\providecommand{\bysame}{\leavevmode\hbox to3em{\hrulefill}\thinspace}
\providecommand{\href}[2]{#2}

\end{document}